\documentclass[11pt]{amsart}

\usepackage[pdftex]{graphicx}
\usepackage[a4paper,margin=2.5cm]{geometry}

\usepackage{amsfonts}
\usepackage{amsmath}
\usepackage{amssymb}
\usepackage{amsthm}
\usepackage{dsfont}
\usepackage{tikz}
\usepackage{float}
\usepackage{subcaption}

\usepackage{mathtools}
\usepackage{relsize}

\usepackage{rotating}

\usepackage[T1]{fontenc}

\usepackage{paralist}
\usepackage{color}

\usepackage{overpic}

\usepackage{thmtools}
\usepackage[colorlinks,cite color=blue,pagebackref=true,pdftex]{hyperref}
\usepackage{cleveref}

\usetikzlibrary{positioning}

\newcommand\R{\mathbb{R}}
\newcommand\Z{\mathbb{Z}}
\newcommand\N{\mathbb{N}}

\newcommand{\proj}{\mathrm{proj}}

\newtheorem{theorem}{Theorem}[section]
\newtheorem{corollary}[theorem]{Corollary}
\newtheorem{lemma}[theorem]{Lemma}
\newtheorem{proposition}[theorem]{Proposition}

\theoremstyle{definition}

\newtheorem{example}[theorem]{Example}
\newtheorem{remark}[theorem]{Remark}

\title[]{Weighted lattice point enumeration in lecture hall order polytopes}

\author{Katharina Jochemko}
\author{Krishna Menon}
\address{Department of Mathematics, KTH Royal Institute of Technology, Stockholm, Sweden}
\email{\{jochemko,puzhan\}@kth.se}

\date{\today}
\thanks{}

\begin{document}

\begin{abstract}
We consider weighted lattice point enumeration in lecture hall order polytopes. We observe that these half-open polytopes are closed under weight-lifting for a large class of polynomial weights, including monomials and weights given by binomial coefficients. For rooted trees, we prove that the corresponding (weighted) $h^\ast$-polynomial is real-rooted thereby complementing previous results of Savage-Visontai and Brändén-Leander.
    
\end{abstract}

\maketitle

\section{Introduction and results}
Given a partially ordered set $P=([d],\preceq)$ on $[d]:=\{1,\ldots, d\}$, Brändén and Leander~\cite{lec_poset} introduced $s$-lecture hall $P$-partitions thereby simultaneously generalizing the classical concept of a $P$-partition developed by Stanley~\cite{enumcomb1} and $s$-lecture hall partitions pioneered by Bousquet-Mélou and Eriksson~\cite{BousquetMelouEriksson}. For $s\colon [d]\rightarrow \mathbb{N}$, a map $f\colon [d]\rightarrow \mathbb{Z}$ is called a \textbf{$s$-lecture hall $P$-partition} (or \textbf{$(P,s)$-partition} for short) if 
\begin{enumerate}
    \item[(i)] $f(i)/s(i)\leq f(j)/s(j)$ whenever $i\prec j$, and 
    \item[(ii)] $f(i)/s(i)<f(j)/s(j)$ whenever $i\prec j$ and $i>j$, 
\end{enumerate}
where $\leq$ denotes the usual total order on the real numbers. The corresponding \textbf{$s$-lecture hall order polytope} is defined as
\[
\mathcal{O}(P,s)=\{f \in \mathbb{R}^d: f \text{ is a } (P,s)\text{-partition and }0\leq f(i)/s(i)\leq 1 \text{ for all }i\in [d]\} \, .
\]
Observe that $\mathcal{O}(P,s)$ is the intersection of finitely many closed and open half-spaces and thus in general half-open. If $s$ is the all-one map then we obtain the (half-open) order polytope $\mathcal{O}(P)$ studied by Stanley~\cite{Stanleyorderchain}.

In this note we investigate structural and arithmetic aspects of weighted lattice point enumeration in $s$-lecture hall order polytopes. By a classical theorem of Ehrhart, given any lattice polytope $Q\subset \mathbb{R}^d$, the number of lattice points in nonnegative integer dilates $nQ=\{nq:q\in Q\}$, $n\geq 0$, is given by a polynomial $\mathrm{E}_Q$ in the dilation factor, that is $\mathrm{E}_Q(n)=|nQ\cap \mathbb{R}^d|$ for integers $n\geq 0$. The polynomial $\mathrm{E}_Q$ is called the \textbf{Ehrhart polynomial} of $Q$. The polynomiality extends beyond simple counting of lattice points to weighted counting if the weights are given by a polynomial~\cite{PK,Alesker}. More precisely, for a polynomial function $w\colon \mathbb{R}^d \rightarrow \mathbb{R}$, there is a polynomial $\mathrm{E}_Q(w;n)$ in~$n$, the \textbf{weighted Ehrhart polynomial}, such that
\[
\mathrm{E}_Q(w;n)  \ = \ \sum _{x\in nP\cap \mathbb{Z}^d}w(x) \, \text{ for integers}n\geq 0\, .
\]
A natural question is to identify situations in which the weighted Ehrhart polynomial is in fact equal to the (unweighted) Ehrhart polynomial of a (higher dimensional) lattice polytope. De Loera et al~\cite{SumsLatticePoints} showed that for a large class of natural  weights $w$, which include monomials and certain binomial coefficients, and any lattice polytope of the form $Q=\{x\colon Ax=b, x\geq 0\}$ there exists a polytope $Q^\ast$, called the \textbf{weight lifting polytope}, such that $\mathrm{E}_Q(w;n)=\mathrm{E}_{Q^\ast}(1;n)=\mathrm{E}_{Q^\ast}(n)$. 

In the following we consider weighted lattice point enumeration in $\mathcal{O}(P,s)$. As a first result we show that the class of $s$-lecture hall order polytopes are closed under weight lifting in the following situations; here, we use $x(k)$ to denote the $k$-th coordinate of $x$.
\begin{proposition}\label{prop:weightlifting}
    Let $\ell \geq 0$ and let $w: \R^d \rightarrow \R$ be given by $w(x) = w_1(x) \cdots w_\ell(x)$ where for each $i \in [\ell]$, the function $w_i$ is of the form
    \begin{itemize}
    \setlength{\itemsep}{0.25cm}
        \item[(i)] $w_i(x) = \dbinom{x(k) + a}{b}$ for some $k \in [d]$ and integers $0 \leq a \leq b$,
        \item[(ii)] $w_i(x) = \dbinom{\lfloor x(k)/s(k)\rfloor + a}{b}$ for some $k \in [d]$ and integers $0 < a \leq b$, or
        \item[(iii)] $w_i(x) = \dbinom{\lceil x(k)/s(k)\rceil + a}{b}$ for some $k \in [d]$ and integers $0 \leq a < b$.
    \end{itemize}
    Then there exists a poset $P^\ast$ and map $s^\ast : P^\ast \rightarrow \N$ such that \[\mathrm{E}_{\mathcal{O}(P,s)}(w;n)=\mathrm{E}_{\mathcal{O}(P^\ast,s^\ast)}(n) \, . \]
\end{proposition}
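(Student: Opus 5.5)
The plan is to realise each factor $w_i$ as the number of ways to extend a $(P,s)$-partition by a chain of $b$ new poset elements hanging below the element $k$. Since the chains attached for different factors impose no conditions on one another, a product of weights becomes a product of independent counts. Concretely, I will show by induction on $\ell$ that adjoining $\ell$ disjoint chains gives a poset $P^\ast$ on $[d+\sum b_i]$ with a map $s^\ast$ extending $s$. The lattice points of $n\mathcal{O}(P^\ast,s^\ast)$ then project onto those of $n\mathcal{O}(P,s)$, and the fibre over $x$ has size exactly $w(x)$. The case $\ell=0$ is trivial.

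The basic counting fact is the following. The number of integer sequences $0\le y_1\le y_2\le\dots\le y_b\le N$ in which exactly $r$ prescribed inequalities among the $b$ steps $y_1\le y_2,\dots,y_{b-1}\le y_b,\ y_b\le N$ are strict equals $\binom{N+b-r}{b}$. To see this, subtract $1$ from the later terms at each strict step; this is a bijection with weakly increasing sequences in $[0,N-r]$, and there are $\binom{N-r+b}{b}$ of those.

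For type (i), adjoin new elements $c_1\prec c_2\prec\dots\prec c_b\prec k$, with $s^\ast(c_j)=s(k)$, together with all relations forced by transitivity. Writing $y_j=f(c_j)$, the condition $f(c_j)/s(k)\le f(k)/s(k)$ reads $y_b\le x(k)$. The condition $0\le y_j\le n s(k)$ is then automatic from $y_1\ge 0$ and $y_b\le x(k)\le ns(k)$. The new relations $c_j\prec k'$ for $k'\succ k$ are also implied by transitivity of inequalities, so they impose nothing new. A step is strict exactly when the smaller element in the order carries the larger label. Since Ehrhart counts are invariant under relabelling coordinates, I am free to choose the labels of $c_1,\dots,c_b$, shifting the old labels while preserving their relative order. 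I choose them so that exactly $r=b-a$ of the $b$ steps are strict, which gives fibre size $\binom{x(k)+a}{b}$ for any $0\le a\le b$. The delicate point, and the one I would verify most carefully, is that such an interleaving of labels exists and does not change strictness among the old relations. I will arrange this by placing each $c_j$ either immediately above or immediately below $k$ in the label order, choosing between the two according to the desired strictness of its steps.

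For types (ii) and (iii), I use the same chain but with $s^\ast(c_j)=1$. Now $y_b\le x(k)/s(k)$ is equivalent to $y_b\le\lfloor x(k)/s(k)\rfloor$, while the strict version $y_b<x(k)/s(k)$ is equivalent to $y_b\le\lceil x(k)/s(k)\rceil-1$. The bounds $0\le y_j\le n$ follow automatically from the chain and from $x(k)\le ns(k)$. If the last step $c_b\prec k$ is non-strict, then $r\le b-1$ of the internal steps are strict, and the fibre count is $\binom{\lfloor x(k)/s(k)\rfloor+b-r}{b}$. This covers exactly the range $0<a\le b$ of case (ii). If the last step is strict, I apply the counting fact with $N=\lceil x(k)/s(k)\rceil-1$ and $r-1$ internal strict steps. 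The count is then $\binom{\lceil x(k)/s(k)\rceil+b-r}{b}$ with $r\ge1$, which covers exactly $0\le a<b$ as in case (iii). Finally, I attach the chains for $w_1,\dots,w_\ell$ one at a time. Each chain only interacts with the element $k$ it hangs from, so the fibre sizes multiply to $w(x)$. Summing over $x\in n\mathcal{O}(P,s)\cap\Z^d$ then gives $\mathrm{E}_{\mathcal{O}(P,s)}(w;n)=\mathrm{E}_{\mathcal{O}(P^\ast,s^\ast)}(n)$.
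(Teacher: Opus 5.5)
Your proposal is correct and follows essentially the same route as the paper. You hang a chain of $b$ new elements below $k$, with $s^\ast=s(k)$ for type (i) and $s^\ast=1$ for types (ii) and (iii); you label it so that exactly $b-a$ of the $b$ steps (counting the step into $k$) are strict, count the fibre with the standard shifting bijection, and multiply over the independent chains when $\ell\geq 2$. The only difference is cosmetic: the paper uses two fixed labelings (all new labels above the old ones when $a<b$, all below them when $a>0$), whereas placing the new labels next to $k$ handles every $0\le a\le b$ at once. This is exactly the flexibility the paper notes in the remark after the proof. In fact any insertion of the new labels would do, since the $(P^\ast,s^\ast)$-conditions only need to be checked on cover relations.
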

In the proof, we provide an explicit construction of $P^\ast$ and $s^\ast$ as extensions of $P$ and $s$, respectively. Observe that the above proposition covers the situation when $w$ is a monomial. Further, note that the last two types of weights are in general no longer polynomials but quasi-polynomials. The result  nevertheless guarantees polynomiality of the weighted Ehrhart function $\mathrm{E}_{\mathcal{O}(P,s)}(w,n)$ via the weight lifting polytope.

An important linear transform of the Ehrhart polynomial is the \textbf{$h^\ast$-polynomial}. For a $d$-dimensional lattice polytope $Q$ it is given as the numerator of the generating series
\[
\sum _{n\geq 0}\mathrm{E}_Q(n)t^n \ = \ \frac{h^\ast (t)}{(1-t)^{d+1}} \, .
\]
 A classical theorem by Stanley~\cite{StanleyNonnegative} guarantees that the coefficients of $h^\ast$ are nonnegative integers. Of ongoing particular interest is the investigation of other families of inequalities, such unimodality and log-concavity conditions on the coefficient sequence~\cite{FerroniHigashitani}. Both these properties follow automatically if the $h^\ast$-polynomial is real-rooted, see, e.g.,~\cite{unimodalitybeyond}. For order polytopes the real-rootedness was conjectured by the Neggers-Stanley conjecture. While generally disproven~\cite{Brandencounterexample,Stembridgecounterexample}, the conjecture is true for order polytopes of series-parallel posets due to results by Wagner~\cite{Wagnerseriesparallel}. A natural question is whether this can be extended to $s$-lecture hall order polytopes. Compare also~\cite[Section 5]{lec_poset}. Savage and Visontai~\cite{sav_vis} proved real-rootedness of the $h^\ast$-polynomial of $\mathcal{O}(P,s)$ for any nonnegative sequence $s$ in case that the poset $P$ is a chain. To prove this result they used an interpretation of the  $h^\ast$-polynomial in terms of ascent statistics over generalized inversion sequences as well as interlacing polynomials, thereby pioneering the method within Ehrhart theory. For general $s$-lecture hall order polytopes, Brändén and Leander~\cite{lec_poset} gave an interpretation of the $h^\ast$-polynomials as generating polynomial of the descent statistics over so-called $s$-colored permutations, generalizing previous results on $P$-Eulerian polynomials of Stanley~\cite{enumcomb1}. Using this interpretation they were able to prove real-rootedness for $\mathcal{O}(P,s)$ if $P$ is an ordinal sum of antichains and $s$ is constant on every antichain in the sum thereby generalizing the results in~\cite{sav_vis}. Continuing the investigations for series-parallel posets, we show that the $s$-lecture hall order polytope of rooted trees is always real-rooted. Here and in the following we always assume the rooted tree to be hanging.

 \begin{theorem}\label{thm:realrooted}
     Let $P$ be a rooted tree on $[d]$ and $s\colon [d]\rightarrow \mathbb{N}$. Then the $h^\ast$-polynomial of $\mathcal{O}(P,s)$ is real-rooted.
 \end{theorem}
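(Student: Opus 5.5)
Our plan is to argue by induction on the tree using interlacing sequences of polynomials, in the spirit of Savage and Visontai~\cite{sav_vis}, and to refine the $h^\ast$-polynomial according to the value at the root. Since $P$ is hanging, the root $r$ is the unique maximal element. Every other element $i$ lies below exactly one child subtree, and the constraints only compare $f(i)/s(i)$ with $f(j)/s(j)$ along tree edges. For $0\le m\le s(r)$, let $E_m(n)$ count the $(P,s)$-partitions in $n\mathcal{O}(P,s)$ with $f(r)=\lceil nm/s(r)\rceil$-type constraints. More precisely, we encode $n\cdot s(r)$ as $k\,s(r)+m$ with $f(r)$ restricted to a residue class. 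We then define refined polynomials $h_{P,m}(t)$ through the generating series, so that $h^\ast_{\mathcal{O}(P,s)}=\sum_m h_{P,m}$, mirroring the inversion-sequence recursion of Savage--Visontai.

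The key structural step is a decomposition. Remove the root $r$ and let the subtrees be $T_1,\dots,T_c$, rooted at the children $r_1,\dots,r_c$. Conditioned on the value $f(r)$, the partitions on the different $T_j$ are independent. Each is constrained only by $f(r_j)/s(r_j)\le f(r)/s(r)$, or by the strict version, depending on whether $r_j>r$ or $r_j<r$. Hence the generating function for $P$ with root value fixed is a product over $j$ of "truncated" generating functions for $T_j$. We would express this as a linear map $\Phi$ applied to the product of the refined vectors of the subtrees. To handle the product, we first treat the case of a single child, which is like adjoining a chain element as in~\cite{sav_vis}. That step replaces the refined vector $(h_{T,0},\dots,h_{T,s(r_1)-1})$ by a vector indexed by residues modulo $s(r)$, via sums of the form
\[ h_{P,m}(t)=\sum_{i< \lceil m s(r_1)/s(r)\rceil} t\,h_{T,i}(t)+\sum_{i\ge \lceil m s(r_1)/s(r)\rceil} h_{T,i}(t). \]
This is the operator of Savage--Visontai, which maps interlacing sequences to interlacing sequences (Brändén's theory of interlacing-preserving matrices).

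The main obstacle is combining several children. For each child we obtain a refined vector indexed by the same residue classes of the root value, and the tree's vector is then a coordinatewise "Hadamard-type" product, which in general does not preserve real-rootedness. To get around this, we would process the children one at a time. We view the tree as built by repeatedly gluing a subtree below an already-constructed tree at its root. We would then show that the sequence of polynomials $\bigl(h_{P,m}\bigr)_m$ for the root-refined tree, interpreted through the descent statistic over $s$-colored permutations of Brändén--Leander~\cite{lec_poset}, satisfies a recursion of the form $h_{P,m}=\sum_i c_{m,i}(t)\,g_i$. Here the $c_{m,i}\in\{1,t\}$ follow a staircase pattern, and the $g_i$ are the refined polynomials of a smaller tree obtained by merging one child chain into the root. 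Concretely, we would first establish the claim for the case where a new leaf is attached below a vertex. We would prove, by induction on the number of vertices, the stronger statement that for every vertex $v$ the refinement by the residue of $f(v)$ yields an interlacing sequence. Attaching a leaf then changes the refined sequence at $v$ through a staircase operator and propagates upward through chain-like operators. Both kinds of operator preserve interlacing by the Savage--Visontai lemma.

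Finally, summing the interlacing family gives the real-rootedness of $h^\ast$. The step I expect to be hardest is verifying that leaf-attachment acts on the refinement at $v$ by a staircase matrix rather than by a product. If this fails, the fallback is to use Wagner's approach for series-parallel posets~\cite{Wagnerseriesparallel}, namely the diamond product preserving Pólya frequency sequences, adapted with the $s$-weights.
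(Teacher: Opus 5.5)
The genuine gap is the multi-child step, which is the heart of the theorem. Your main plan cannot work: it claims that attaching a leaf below a vertex $v$ changes the residue refinement at $v$ by a $\{1,t\}$-staircase.

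\emph{What a staircase encodes.} A staircase $h_m=t\sum_{i<k_m}g_i+\sum_{i\ge k_m}g_i$ is the generating-function form of a cumulative sum $\sum_{k\le n}$ or $\sum_{k<n}$ of counting functions. That is, it describes adjoining a new element \emph{above}. Attaching a leaf $\ell$ below $v$ instead \emph{multiplies} the count with $f(v)=ns(v)+r$ by the number of admissible leaf values, $ns(\ell)+c_r$. This is a Hadamard product with $\sum_n(ns(\ell)+c_r)t^n$, which acts on numerators as a differential operator, not a staircase.

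\emph{A concrete failure.} Take $s\equiv 1$ with non-strict relations. Every residue refinement is then a single polynomial $g$, so a staircase can only return $g$ or $tg$. A root with one leaf has series $\sum_n(n+1)t^n=1/(1-t)^2$, with numerator $1$. After attaching a second leaf the series is $\sum_n(n+1)^2t^n=(1+t)/(1-t)^3$, with numerator $1+t$, which is neither $1$ nor $t$. A finer, permutation-type refinement would be needed, and you do not supply one.

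\emph{Upward propagation.} Likewise, ``propagating upward through chain-like operators'' fails at every ancestor with several children. There the refined family is again a product over all its children.

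\emph{Your reason for avoiding the product is mistaken.} By Wagner's theorem, Hadamard products of these series correspond to diamond products of $f$-polynomials. These \emph{do} preserve real-rootedness with roots in $[-1,0]$.

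What is actually needed, and what you only name as a fallback, is that the \emph{interlacing} of the refined family survives the product. The paper's argument runs as follows.
\begin{itemize}
\item It writes $A_{T,r}/(1-t)^d$ as the Hadamard product, over the children $T_i$, of $\bigl(\sum_{r'\le k_i}A_{T_i,r'}+t\sum_{r'>k_i}A_{T_i,r'}\bigr)/(1-t)^{d_i+1}$.
\item Passing to $f$-polynomials gives $h_r=g_{1,r}\diamond\cdots\diamond g_{m,r}$.
\item By induction and the Savage--Visontai lemma, $f_i\preceq g_{i,r}\preceq g_{i,r'}\preceq\tilde f_i$ for $r'<r$. Here $f_i$ and $\tilde f_i$ are the $f$-polynomials of $\sum_{r'}A_{T_i,r'}$ and $t\sum_{r'}A_{T_i,r'}$.
\item Br\"and\'en's theorem says that $\diamond$ with a fixed polynomial having roots in $[-1,0]$ preserves interlacing. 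This lets one swap factors one at a time, giving an interlacing chain from $f_1\diamond\cdots\diamond f_m$ through $h_r$ and $h_{r'}$ to $\tilde f_1\diamond\cdots\diamond\tilde f_m$.
\item The two ends are the $f$-polynomials of some $h$ and of $th$, so they interlace.
\item The first--last lemma then gives $h_r\preceq h_{r'}$.
\end{itemize}

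A minor point: with the residue refinement, $h^\ast=A_{T,0}+t\sum_{r>0}A_{T,r}$, not the plain sum $\sum_m A_{T,m}$.
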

 For the proof we adapt the interlacing techniques used by Savage and Visontai~\cite{sav_vis} and Brändén and Leander~\cite{lec_poset}.

 Analogous to the (unweighted) $h^\ast$-polynomial, the weighted $h^\ast$-polynomial of a lattice polytope can be defined as the numerator of the generating function of the weighted Ehrhart polynomial~\cite{weighted}. Furthermore, from the proof of Proposition~\ref{prop:weightlifting}, it follows that $P^\ast$ can be chosen to be a rooted tree for any rooted tree $P$. Theorem~\ref{thm:realrooted} thus also holds in the more general weighted setting of Proposition~\ref{prop:weightlifting}.
 \begin{corollary}\label{cor:weightedrealrooted}
        Let $P$ be a rooted tree on $[d]$, $s\colon [d]\rightarrow \mathbb{N}$ and let $w$ be any of the weights in Proposition~\ref{prop:weightlifting}. Then the weighted $h^\ast$-polynomial of $\mathcal{O}(P,s)$ is real-rooted.
\end{corollary}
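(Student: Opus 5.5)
The corollary should follow by combining Proposition~\ref{prop:weightlifting} with Theorem~\ref{thm:realrooted}. The one point needing care is that the lifted poset $P^\ast$ stays a rooted tree.

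First I will record why weighted $h^\ast$-polynomials are determined by weighted Ehrhart series. Proposition~\ref{prop:weightlifting} gives $\mathrm{E}_{\mathcal{O}(P,s)}(w;n)=\mathrm{E}_{\mathcal{O}(P^\ast,s^\ast)}(n)$ for all $n\geq 0$. Hence the generating functions $\sum_{n\geq 0}\mathrm{E}_{\mathcal{O}(P,s)}(w;n)t^n$ and $\sum_{n\geq 0}\mathrm{E}_{\mathcal{O}(P^\ast,s^\ast)}(n)t^n$ coincide. The weighted $h^\ast$-polynomial is defined as the numerator of the first series over $(1-t)^{d+\deg w+1}$, where $\deg w=\sum_i b_i$ for the factors in the proposition. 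The unweighted $h^\ast$-polynomial of $\mathcal{O}(P^\ast,s^\ast)$ is the numerator over $(1-t)^{\dim+1}$. I will check that each factor $w_i$ adds exactly $b_i$ new elements to the poset, so that $|P^\ast|=d+\sum_i b_i$, and that $\mathcal{O}(P^\ast,s^\ast)$ is full-dimensional. Then the two denominators agree and the two numerators are literally equal. Even if the normalizations differed, the numerators would only differ by a factor that is a power of $(1-t)$, which preserves real-rootedness. So either way the weighted $h^\ast$-polynomial of $\mathcal{O}(P,s)$ is real-rooted as soon as the $h^\ast$-polynomial of $\mathcal{O}(P^\ast,s^\ast)$ is.

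The main step is to verify from the explicit construction in the proof of Proposition~\ref{prop:weightlifting} that $P^\ast$ is again a rooted tree whenever $P$ is. I expect the construction to handle a factor $\binom{x(k)+a}{b}$, or its floor/ceiling variant, by adjoining a chain of $b$ new elements below the element $k$. Their $s^\ast$-values are chosen so that the lattice points $(y_1,\dots,y_b)$ of the chain, subject to the ratio inequalities, are counted by the binomial coefficient. The ratio constraints are imposed relative to $x(k)$, and the strictness conventions of condition (ii), determined by the labels of the new elements, are what produce the shifts by $a$. In a hanging rooted tree every element other than the root has exactly one cover above it. Attaching a new chain that sits below $k$ and is comparable to nothing else keeps that property: each new element is covered only by its successor in the chain or by $k$, and no element of $P$ acquires a second upper cover. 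Iterating over the $\ell$ factors, with each new chain attached at the appropriate $k$ (possibly several chains at the same $k$), still yields a rooted tree with the same root.

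The obstacle is confirming the orientation. If the construction instead hangs the chains above $k$, then $k$ would acquire several upper covers whenever two chains attach to it, or whenever $k$ is not the root. In that case I would try to re-express the gadget with the chain placed below $k$, using the symmetry $f\mapsto s-f$, which reverses the order and turns floors into ceilings, exchanging cases (ii) and (iii). With the tree property established, Theorem~\ref{thm:realrooted} applied to $(P^\ast,s^\ast)$ gives real-rootedness, which completes the argument.
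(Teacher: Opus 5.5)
Your proposal is correct and matches the paper's argument. The construction in the proof of Proposition~\ref{prop:weightlifting} attaches each chain of $b$ new elements \emph{below} $k$, e.g.\ $d+1\prec\cdots\prec d+a\prec d+b\prec d+b-1\prec\cdots\prec d+a+1\prec k$. So $P^\ast$ stays a hanging rooted tree, Theorem~\ref{thm:realrooted} applies, and your fallback for chains hanging above $k$ is never needed. Your bookkeeping on the denominators ($|P^\ast|=d+\sum_i b_i$, or equality up to a harmless power of $(1-t)$) makes explicit what the paper leaves implicit.
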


\section{Proofs}
\subsection{Weight lifting polytopes}
In this subsection we prove Proposition~\ref{prop:weightlifting} by giving explicit constructions of $P^\ast$ and $s^\ast$. The idea is to realize any weight $w_i(x)$ that only depends on one coordinate $x(k)$ by attaching chains to the corresponding element $k$ of $P$. In particular, $P^\ast$ can be chosen to be an extension of $P$, and especially to be a rooted tree whenever $P$ is a rooted tree.

\begin{proof}[Proof of Proposition~\ref{prop:weightlifting}]
    We first assume that $\ell=1$ and $0\leq a<b$. In this case, we define the poset $P^\ast$ on $[d+b]$ as having the following relations:
    \[
    i\prec _{P^\ast} j \quad \text{ whenever } \quad i\prec _P j \text{ for all }i,j\in [d] \, , 
    \]
    and
    \[
    d+1 \prec _{P^\ast} d+2\prec _{P ^\ast}\cdots \prec _{P^\ast}d+a \prec _{P^\ast} d+b \prec _{P^\ast}d+b-1\prec _{P^\ast} \cdots \prec _{P^\ast} d+a+1 \prec _{P^\ast}k \, .
    \]
    We observe that $P^\ast$ arises from $P$ by attaching a (hanging) chain with $b$ elements to the element~$k$.
    In order to recover the weighted Ehrhart functions of $\mathcal{O}(P,s)$ as the (unweighted) Ehrhart polynomial of $\mathcal{O}(P^\ast,s^\ast)$ in the cases $(i)$ and $(iii)$, we define $s^\ast$ in two different ways. Let $\proj$ denotes the projection map from $\mathbb{R}^{d+b}$ onto the first $d$ coordinates.

    For $(i)$ we extend $s^\ast$ from $P$ to $P^\ast$ by setting
    $s^\ast (d+i)=s(k)$ for all $1\leq i\leq b$. Since $s$ is unchanged on $P$ we have that $\proj (\mathcal{O}(P^\ast,s^\ast))\subseteq \mathcal{O}(P,s)$. Thus, the Ehrhart function of $\mathcal{O}(P^\ast,s^\ast)$ can be calculated as
    \[
    \mathrm{E}_{\mathcal{O}(P^\ast,s^\ast)}(n)  \ = \ \sum _{x\in n\mathcal{O}(P,s)\cap \mathbb{Z}^d}|\proj ^{-1}(x)\cap n\mathcal{O}(P^\ast,s^\ast) \cap \mathbb{Z}^{d+b}| \, .
    \]
    By construction, for given $x$, $|\proj ^{-1}(x)\cap n\mathcal{O}(P^\ast,s^\ast) \cap \mathbb{Z}^{d+b}|$ equals the number of all integer vectors $(x(d+1),\ldots, x(d+b))$ for which 
    \[
0\leq \frac{x(d+1)}{s(k)}\leq \cdots \leq \frac{x(d+a)}{s(k)} \leq \frac{x(d+b)}{s(k)}<\frac{x(d+b-1)}{s(k)}<\cdots <\frac{x(d+a+1)}{s(k)}<\frac{x(k)}{s(k)}\, .
    \]
     By elementary counting this is number is equal to ${ x(k) +a \choose b}$ which proves $(i)$ in the case $0\leq a<b$.
    
    To prove the claim $(iii)$, we extend $s^\ast$ from $P$ to $P^\ast$ by setting
    $s^\ast (d+i)=1$ for all $1\leq i\leq b$. Then $|\proj ^{-1}(x)\cap n\mathcal{O}(P^\ast,s^\ast) \cap \mathbb{Z}^{d+b}|$ equals the number of all integer vectors $(x(d+1),\ldots, x(d+b))$ for which 
    \[
0\leq x(d+1)\leq \cdots \leq x(d+a) \leq x(d+b)<x(d+b-1)<\cdots <x(d+a+1)<x(k)/s(k) \, ,
    \]
    which is equal to $w_i(x) = \dbinom{\lceil x(k)/s(k)\rceil + a}{b}$ as desired.

Claim $(ii)$ and $(i)$ for $0<a\leq b$ can be seen analogously if we consider the poset $P^\ast$ on $[b+d]$ with the relations:
    \[
    d+i\prec _{P^\ast} d+j \quad \text{ whenver } \quad i\prec _P j \text{ for all }i,j\in [d] \, , 
    \]
    and
    \[
     b \prec _{P^\ast}b-1\prec _{P^\ast} \cdots \prec _{P^\ast} a+1 \prec _{P^\ast}1 \prec _{P^\ast} 2\prec _{P ^\ast}\cdots \prec _{P^\ast}a \prec _{P^\ast}d+k \, .
    \]
    Note that the shift of the elements in $P$ is necessary to cover the case $a=b$, while the extension $s^\ast$ is defined in the same way as above.

        By attaching multiple such chains (see \Cref{ex:monomial,ex:prodbinom}), we can also realize the products of such weights ($l \geq 2$) via lecture hall order polytopes.
\end{proof}

\begin{remark}
    The exact choice of elements in the attached chain in the poset $P^\ast$ is not important in the proof of Proposition~\ref{prop:weightlifting}. It is sufficient that the elements in the chain, together with the last element $k$, form a permutation with $a$ decents (after renormalization).
\end{remark}
\begin{example}\label{ex:monomial}
    If $d = 4$, the poset $P$ is the naturally ordered chain on the elements $[4]$, $w(x) = x(1) x(3)^2 x(4)$, and $(P^\ast, s^\ast)$ is as in \Cref{fig:monomial}, then $\mathrm{E}_{\mathcal{O}(P,s)}(w;n)=\mathrm{E}_{\mathcal{O}(P^\ast,s^\ast)}(n)$.
\end{example}

\begin{example}\label{ex:prodbinom}
    If $P$ is the poset on the left in \Cref{fig:binomial}, and the weight $w$ is given by
    \begin{equation*}
        w(x) = \binom{\lfloor x(2)/s(2)\rfloor + 1}{2} \cdot x(3) \binom{\lceil x(3)/s(3)\rceil}{2} \cdot (\lfloor x(4)/s(4)\rfloor + 1)^2 \cdot \binom{x(5) + 2}{3},
    \end{equation*}
    then we have $\mathrm{E}_{\mathcal{O}(P,s)}(w;n)=\mathrm{E}_{\mathcal{O}(P^\ast,s^\ast)}(n)$ where $(P^\ast, s^\ast)$ is given on the right in \Cref{fig:binomial}.
\end{example}

\begin{figure}[H]
    \centering

    \begin{subfigure}[t]{0.30\textwidth}
        \centering
        \resizebox{\linewidth}{!}{%
        \begin{tikzpicture}[yscale=0.6]
        \node[circle, draw, label = {\tiny \color{red} $s(4)$}] (4) at (0,4) {\footnotesize $4$};
        \node[circle, draw, label = {\tiny \color{red} $s(4)$}] (8) at (-1.8,3) {\footnotesize $8$};
        \node[circle, draw] (3) at (0,2.2) {\footnotesize $3$};
        \node at (-0.75, 2.2) {\tiny \color{red} $s(3)$};
        \node[circle, draw, label = {\tiny \color{red} $s(3)$}] (6) at (2.0,1.7) {\footnotesize $6$};
        \node[circle, draw, label = {\tiny \color{red} $s(3)$}] (7) at (1.35,0.55) {\footnotesize $7$};
        \node[circle, draw] (2) at (0,0.45) {\footnotesize $2$};
        \node at (-0.75, 0.45) {\tiny \color{red} $s(2)$};
        \node[circle, draw] (1) at (0,-1.3) {\footnotesize $1$};
        \node at (0.75, -1.3) {\tiny \color{red} $s(1)$};
        \node[circle, draw, label = {\tiny \color{red} $s(1)$}] (5) at (-1.4,-2.3) {\footnotesize $5$};

        \draw (5) -- (1) -- (2) -- (3) -- (7);
        \draw (6) -- (3) -- (4) -- (8);
        \end{tikzpicture}}
        \caption{The weight lifting polytope for \Cref{ex:monomial}.}
        \label{fig:monomial}
    \end{subfigure}
    \hfill
    \begin{subfigure}[t]{0.65\textwidth}
        \centering
        \resizebox{\linewidth}{!}{%
        \begin{tikzpicture}[xscale=0.9]
                    \node[circle, draw] (p4) at (4 - 9, 1) {4};
        \node[circle, draw] (p2) at (2 - 9, 2) {2};
        \node[circle, draw] (p3) at (2 - 9, 4.5) {3};
        \node[circle, draw] (p1) at (4 - 9, 5.5) {1};
        \node[circle, draw] (p5) at (6 - 9, 3.125) {5};
        \draw (p4) -- (p2) -- (p3) -- (p1) -- (p5) -- (p4);

        \node at (-1.5, 3.125) {$\rightarrow$};
        
        \node[circle, draw, label = {\tiny \color{red} $s(4)$}] (11) at (4, 1) {8};
        \node[circle, draw, label = {\tiny \color{red} $1$}] (9) at (3, 0) {1};
        \node[circle, draw, label = {\tiny \color{red} $1$}] (10) at (5, 0) {2};

        \node[circle, draw] (3) at (2, 2) {6};
        \node at (2.55, 2.4) {\color{red} \tiny $s(2)$};
        \node[circle, draw, label = {\tiny \color{red} $1$}] (2) at (1, 1) {3};
        \node[circle, draw, label = {\tiny \color{red} $1$}] (4) at (0, 0) {4};

        \node[circle, draw, label = {\tiny \color{red} $s(3)$}] (5) at (2, 4.5) {7};
        \node[circle, draw, label = {\tiny \color{red} $s(3)$}] (6) at (0.25, 3.85) {\footnotesize 10};
        \node[circle, draw, label = {\tiny \color{red} $1$}] (7) at (1.125, 3.125) {\footnotesize 11};
        \node[circle, draw, label = {\tiny \color{red} $1$}] (8) at (0.25, 2) {\footnotesize 12};

        \node[circle, draw, label = {\tiny \color{red} $s(1)$}] (1) at (4, 5.5) {5};

        \node[circle, draw, label = {\tiny \color{red} $s(5)$}] (15) at (6, 3.125) {9};
        \node[circle, draw, label = {\tiny \color{red} $s(5)$}] (14) at (7, 2.125) {\footnotesize 15};
        \node[circle, draw, label = {\tiny \color{red} $s(5)$}] (12) at (8, 1.125) {\footnotesize 14};
        \node[circle, draw, label = {\tiny \color{red} $s(5)$}] (13) at (9, 0.125) {\footnotesize 13};

        \draw (11) -- (3) -- (5) -- (1) -- (15) -- (11);
        \draw (9) -- (11) -- (10);
        \draw (4) -- (2) -- (3);
        \draw (8) -- (7) -- (5) -- (6);
        \draw (13) -- (12) -- (14) -- (15);
        \end{tikzpicture}}
        \caption{The poset and weight lifting polytope for \Cref{ex:prodbinom}.}
        \label{fig:binomial}
    \end{subfigure}

    \caption{Examples of weight lifting polytopes.}
    \label{fig:examples}
\end{figure}

In the proof of Proposition~\ref{prop:weightlifting} we attached chains to elements to construct the weight lifting polytope of the given binomial weights. Those chains are themselves $s$-lecture hall order polytopes. We conclude this section by observing that the same argument can be applied to obtain the following general weight lifting result by attaching posets $P_i$ to the elements $i$ of $P$.

\begin{proposition}\label{prop:ulecweights}
    Let $l \geq 0$ and let $w \colon \R^d \rightarrow \R$ be given by $w(x) = w_1(x) \cdots w_l(x)$ where for each $i \in [l]$, the function $w_i$ is of the form 
    \begin{itemize}
        \item[(i)] $w_i(x) = \mathrm{E}_{\mathcal{O}(P_i,u_i)}(x(i))$,
        \item[(ii)] $w_i(x) = \mathrm{E}_{\mathcal{O}(P_i)}(\lfloor x(i)/s(i)\rfloor)$, or
                \item[(iii)] $w_i(x) = \mathrm{E}_{\mathcal{O}(P_i)}(\lceil x(i)/s(i)\rceil -1)$
    \end{itemize}
    for finite posets $P_i$ and sequences of integers $u_i$. 
    Then there exists a poset $P^\ast$ and function $s^\ast : P^\ast \rightarrow \N$ such that \[\mathrm{E}_{\mathcal{O}(P,s)}(w;n)=\mathrm{E}_{\mathcal{O}(P^\ast,s^\ast)}(n) \, . \]
\end{proposition}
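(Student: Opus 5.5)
We will adapt the construction in the proof of Proposition~\ref{prop:weightlifting}: each factor $w_i$ is realized by attaching a suitably modified copy of $P_i$ \emph{below} the element $i$ of $P$. Doing this independently for each factor then handles products. The basic tool is again the fibre-counting identity
\[
\mathrm{E}_{\mathcal{O}(P^\ast,s^\ast)}(n) \ = \ \sum_{x\in n\mathcal{O}(P,s)\cap\Z^d} |\proj^{-1}(x)\cap n\mathcal{O}(P^\ast,s^\ast)\cap \Z^{|P^\ast|}| \, ,
\]
which holds whenever $P^\ast$ restricts to $P$ on $[d]$ and $s^\ast$ restricts to $s$. The work is to design the attachment so that the fibre over $x$ has size exactly $w_i(x)$.

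For a single factor ($l=1$), let $P^\ast$ be $P\sqcup P_i$, with every element of $P_i$ made smaller than $i$. We relabel so that the condition coming from the label order is the one we want.

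\begin{itemize}
\item[(i)] Set $s^\ast = u_i$ on $P_i$ (scaled as needed). The fibre over $x$ then consists of the integer $g$ on $P_i$ with $0\le g(p)/u_i(p)$, satisfying the $(P_i,u_i)$-partition conditions, and with $g(p)/u_i(p)\le x(i)/s(i)$ or $<x(i)/s(i)$. The bound is strict or weak according to whether the labels of the elements of $P_i$ are larger or smaller than the label of $i$.
\item[(ii)] To get a weak bound we give $P_i$ labels smaller than $i$, as in the shifted construction used for case (ii) of Proposition~\ref{prop:weightlifting}. With $s^\ast\equiv 1$ on $P_i$, the condition $g(p)\le x(i)/s(i)$ for integer $g(p)$ is equivalent to $g(p)\le \lfloor x(i)/s(i)\rfloor$. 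The fibre therefore counts lattice points of $\lfloor x(i)/s(i)\rfloor\,\mathcal{O}(P_i)$, giving $\mathrm{E}_{\mathcal{O}(P_i)}(\lfloor x(i)/s(i)\rfloor)$.
\item[(iii)] We give $P_i$ labels larger than $i$. Then $g(p)<x(i)/s(i)$ for integer $g(p)$ is equivalent to $g(p)\le \lceil x(i)/s(i)\rceil-1$, which gives $\mathrm{E}_{\mathcal{O}(P_i)}(\lceil x(i)/s(i)\rceil -1)$.
\end{itemize}

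In (i) the intended reading is presumably that $s^\ast$ on $P_i$ is $s(i)\,u_i$. The constraint then becomes $g(p)/u_i(p)\le x(i)$, and the fibre counts points of $x(i)\,\mathcal{O}(P_i,u_i)$, as in case (i) of Proposition~\ref{prop:weightlifting}, where $s^\ast=s(k)$ on the chain. So we set $s^\ast(p)=s(i)u_i(p)$ for $p\in P_i$.

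The step I expect to be the main obstacle is the relabelling. In a $(P_i,u_i)$-partition, the strict inequalities within $P_i$ are dictated by descents of the natural labeling of $P_i$. These must be preserved after embedding into $P^\ast$ on a new ground set, while the relations between the maximal elements of $P_i$ and $i$ must receive the chosen strictness. The plan is to relabel $P_i$ order-preservingly on a block of consecutive integers, so that descents inside $P_i$ are unchanged. This block is placed entirely above or entirely below $i$, shifting the labels of $P$ if necessary, as the paper did to handle $a=b$. Shifting $P$ by a constant also preserves its internal descents.

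Only relations of the form $p\prec i$ are added, and these concern the maximal elements of $P_i$; the other elements of $P_i$ inherit the bound by transitivity. Also, $0\le g(p)/s^\ast(p)$ and the upper bound $\le n$ follow from the bound by $x(i)$. Hence the fibre is exactly as claimed.

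For $l\ge 2$ we attach disjoint copies of the posets for all factors, possibly several at the same element. The fibre conditions on different attached blocks are independent given $x$, so the fibre size is the product $w_1(x)\cdots w_l(x)$, which proves the claim.
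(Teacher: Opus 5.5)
Your proposal is correct and is essentially the paper's approach: the paper only remarks that the chain-attaching proof of Proposition~\ref{prop:weightlifting} carries over when one attaches $P_i$ to the element $i$, and your construction fills in exactly those details (hanging a copy of $P_i$ below $i$ with $s^\ast=s(i)u_i$ in case (i) and $s^\ast\equiv 1$ in cases (ii) and (iii), an order-preserving block relabelling placed below resp.\ above $i$ to make the covers $p\lessdot i$ weak resp.\ strict, fibre counting, and independent fibres for products). The two points you leave implicit are routine: strictness for the transitive relations $p\prec j$ with $i\prec j$ in $P$ follows from the cover relations, since if $p$ has a larger label than $j$ then some cover along a saturated chain from $p$ to $j$ is a label descent; and in case (iii) the empty fibre over $x(i)=0$ matches $\mathrm{E}_{\mathcal{O}(P_i)}(-1)=0$, which holds for nonempty $P_i$ because this value is the order polynomial of $P_i$ evaluated at $0$.
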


\subsection{Interlacing polynomials.}
Before going forward with the proof of \Cref{thm:realrooted}, we gather necessary preliminaries on real-rooted and interlacing polynomials. The reader familiar with the latter can skip this subsection.
Recall that the polynomial $f$ \textbf{interlaces} the polynomial $g$, denoted $f \preceq g$, if both $f$ and $g$ are real-rooted, and if the roots of $f$ and $g$ are $r_m \leq \cdots \leq r_1$ and $s_k \leq \cdots \leq s_1$ respectively, then
    \begin{equation*}
        \cdots \leq r_2 \leq s_2 \leq r_1 \leq s_1.
    \end{equation*}
Note that if $f \preceq g$, then $\deg f \leq \deg g \leq \deg f + 1$. Further, $0\preceq f$ and $f\preceq 0$ by definition. We will be working with sequences of interlacing polynomials.

\begin{lemma}{\cite[Lemma 2.3]{petter_polya}}\label{firstlast_interlacing}
    If $f_1, \ldots, f_m$ are real-rooted polynomials with $f_{i - 1} \preceq f_i$ for all $2 \leq i \leq m$ and $f_1 \preceq f_m$, then $f_i \preceq f_j$ whenever $i < j$.
\end{lemma}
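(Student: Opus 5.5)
This is the cited lemma of Brändén (Lemma 2.3 of \cite{petter_polya}). The plan is an induction on $m$, using two standard facts about interlacing of real-rooted polynomials with positive leading coefficients:
\begin{itemize}
\item[(a)] if $f\preceq g$ and $f\preceq h$, then $f\preceq g+h$ (the same holds with the roles reversed);
\item[(b)] the relation $\preceq$ is encoded by the sign pattern of $g$ at the roots of $f$: $f\preceq g$ means $g$ alternates in sign in a prescribed way at consecutive roots of $f$, with $g$ of the appropriate degree. Equivalently, via Hermite--Biehler / Obreschkoff, $f\preceq g$ iff $g+zf$ is real-rooted for all real $z$ with the correct orientation.
\end{itemize}
I assume, as in the source, nonnegative leading coefficients. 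The cases $m\le 2$ are trivial, so assume $m\ge3$.

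\textbf{Step 1: reduce to the first and last polynomial.} The core statement is: given $f_1\preceq f_2\preceq\cdots\preceq f_m$ and $f_1\preceq f_m$, we have $f_1\preceq f_j$ for every $j$, and symmetrically $f_i\preceq f_m$ for every $i$.

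Once this is known, fix $i<j$. The subsequence $f_i,\dots,f_j$ is consecutively interlacing. If we know $f_i\preceq f_j$, the pair is handled. To get $f_i\preceq f_j$ itself:
\begin{itemize}
\item apply the core statement to $f_1,\dots,f_m$ to obtain $f_i\preceq f_m$;
\item apply it to the subsequence $f_i,\dots,f_m$, now with first $f_i$ and last $f_m$, to obtain $f_i\preceq f_j$.
\end{itemize}

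\textbf{Step 2: prove $f_1\preceq f_j$ by a root-location argument.} Write $r_1>r_2>\cdots$ for the roots of $f_1$ (first assuming simple roots and no common zeros; the general case follows by a continuity/perturbation argument, since interlacing is a closed condition).

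Normalize all leading coefficients to be positive. Then $f_1\preceq g$ is equivalent to the condition that $(-1)^{k}g(r_k)\ge0$ for all $k$, up to a fixed global sign convention, together with $\deg f_1\le \deg g\le\deg f_1+1$.

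The key is a "cyclic" sign argument:
\begin{itemize}
\item Interlacing $f_{j-1}\preceq f_j$ says that $f_j$ lies in the cone of polynomials interlaced by $f_{j-1}$.
\item One tracks the argument of the points $(f_{j}(x),f_{j-1}(x))$, or equivalently uses that $f_j/f_{j-1}$ is a decreasing function between poles (Möbius-type monotonicity).
\item The chain of interlacings forces the "angles" $\theta_j(x)$, defined by $f_j(x)=\rho\cos\theta_j$, to be monotone in $j$ at each $x$.
\item The closing condition $f_1\preceq f_m$ keeps the total rotation from $f_1$ to $f_m$ at most $\pi$.
\item Hence every intermediate angle lies between those of $f_1$ and $f_m$, which gives the sign condition at every root of $f_1$.
\end{itemize}

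Concretely, I would use the fact that for fixed $x$ not a root, the map $j\mapsto$ (sign data) is monotone. Then $f_j(r_k)$ has the same sign as $f_2(r_k)$ and $f_m(r_k)$, which are known from $f_1\preceq f_2$ and $f_1\preceq f_m$.

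\textbf{Main obstacle.} The main difficulty is making the "total rotation at most $\pi$" argument rigorous. The difficulty lies specifically at points where $f_1(r_k)=0$ and some intermediate $f_j$ also vanishes there, and in checking the degree conditions.

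I would first try a cleaner route: use the characterization that $f\preceq g$ iff $g-zf$ has only real roots for all $z$ with appropriate sign behaviour, i.e., iff $g/f$ is a decreasing function on each interval between roots of $f$. The interlacings then give a monotone chain of these ratios, from which $f_1\preceq f_j$ follows by comparing ratios at the roots of $f_1$. Degenerate cases would then be handled by perturbation.
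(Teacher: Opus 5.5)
\textbf{There is a genuine gap in Step 2.} Your Step 1 reduction is valid, but Step 2 carries the whole content of the lemma, and it is a heuristic rather than a proof, as you acknowledge.

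\begin{itemize}
\item The ``angle'' $\theta_j(x)$ is never defined. Writing $f_j(x)=\rho\cos\theta_j$ does not determine an angle from a single real number.
\item The decisive assertions are unproved: that the chain of interlacings makes the angles (or the ``sign data'') monotone in $j$, and hence that $f_j(r_k)$ has the same sign as $f_2(r_k)$ and $f_m(r_k)$. By your own sign characterization, these amount to restating $f_1\preceq f_j$, which is what you need to show.
\item The fallback via ratios does not close the gap. The hypotheses only say that each $f_j/f_{j-1}$ is monotone between the zeros of $f_{j-1}$. These ratios have different poles, so there is no ``monotone chain'' to evaluate at the zeros of $f_1$.
\item The reduction to simple, non-shared zeros is not free. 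You would have to perturb all of $f_1,\dots,f_m$ simultaneously while preserving all $m$ interlacing hypotheses.
\end{itemize}

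\textbf{The missing idea.} The paper gives no proof of its own and only cites Br\"and\'en. But with the definition of $\preceq$ used here, which is purely in terms of ordered zeros, no sign or ratio analysis is needed.

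For a nonzero real-rooted $p$, let $N_p(x)$ be the number of zeros of $p$, counted with multiplicity, that are strictly larger than $x$. Unwinding the definition, for nonzero $f,g$ one has $f\preceq g$ if and only if $N_f(x)\le N_g(x)\le N_f(x)+1$ for all $x\in\R$:
\begin{itemize}
\item the left inequality is $r_k\le s_k$ for all $k$;
\item the right inequality is $s_{k+1}\le r_k$ for all $k$, with absent zeros treated as $-\infty$;
\item letting $x\to-\infty$ recovers $\deg f\le\deg g\le\deg f+1$.
\end{itemize}
The hypotheses then read $N_{f_1}\le N_{f_2}\le\cdots\le N_{f_m}\le N_{f_1}+1$. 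For $i<j$ this gives $N_{f_i}\le N_{f_j}\le N_{f_m}\le N_{f_1}+1\le N_{f_i}+1$, i.e.\ $f_i\preceq f_j$.

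This is exactly your ``monotone angle with total rotation at most $\pi$'' picture made rigorous. For $p$ with positive leading coefficient and $x$ not a zero of $p$, $\pi N_p(x)$ is the limit as $\epsilon\to 0^+$ of $\sum_k \arg(x+i\epsilon-\rho_k)$, summed over the zeros $\rho_k$ of $p$. The argument handles multiple and common zeros without any perturbation. It also yields all pairs $i<j$ at once, so Step 1 becomes unnecessary.

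\textbf{A caveat on the statement.} The $f_i$ must be nonzero. With the convention $0\preceq f$, taking $f_1=0$ makes the closing hypothesis vacuous, and consecutive interlacing alone does not suffice. For example, $x(x-2)\preceq(x-1)(x-3)\preceq(x-\tfrac52)(x-\tfrac92)$, but $x(x-2)\not\preceq(x-\tfrac52)(x-\tfrac92)$.
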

The following result forms the backbone of many real-rootedness results in combinatorics in recent years as it allows to define sequences of interlacing polynomials recursively, see~\cite{unimodalitybeyond}. In particular, it was used in~\cite{sav_vis}.
\begin{proposition}\label{prop:interlacingsequences}
    Let $f_1, \ldots, f_m$ be real-rooted polynomials with nonnegative coefficients such that $f_i \preceq f_j$ for all $i < j$. 
    Let $0 \leq k_1 \leq \cdots \leq k_{m'} \leq m$ and for each $i \in [m']$, let
    \begin{equation*}
        g_i(t) = t \sum_{l = 0}^{k_i - 1} f_l(t) + \sum_{l = k_i}^m f_l(t).
    \end{equation*}
    Then $g_i \preceq g_j$ for all $i < j$.
\end{proposition}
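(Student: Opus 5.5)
The approach is to pass to the full family indexed by every threshold, prove that consecutive members interlace by splitting off the single term in which they differ, and then apply Lemma~\ref{firstlast_interlacing}. For $0 \leq k \leq m$ put
\[
G_k(t) = t\sum_{l<k} f_l(t) + \sum_{l \geq k} f_l(t).
\]
Each $g_i$ equals $G_{k_i}$ for a nondecreasing choice of $k_i$. If $k_i = k_j$ then $g_i = g_j$, and $g \preceq g$ holds trivially. So it suffices to prove $G_a \preceq G_b$ for all $a<b$.

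I will use two standard facts about real-rooted polynomials with nonnegative coefficients.
\begin{itemize}
\item[(a)] \emph{Cone property.} If $f \preceq h_1$ and $f \preceq h_2$, then $f \preceq h_1+h_2$. Likewise, if $h_1 \preceq f$ and $h_2 \preceq f$, then $h_1+h_2 \preceq f$. In particular such sums are real-rooted.
\item[(b)] \emph{Multiplication by $t$.} If $f \preceq g$, then $g \preceq tf$. In particular $f \preceq tf$ for every such $f$.
\end{itemize}
Both follow from the usual characterization of interlacing via the sign of the Wronskian, or via the Hermite--Kakeya--Obreschkoff theorem. They are the same tools used in \cite{sav_vis,unimodalitybeyond}.

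\textbf{Consecutive step.} Fix $k$ and write $G_k = H + f_k$ and $G_{k+1} = H + t f_k$, where
\[
H = t\sum_{l<k} f_l + \sum_{l>k} f_l.
\]
First, $f_k \preceq f_l$ for $l>k$ by hypothesis. For $l<k$ we have $f_l \preceq f_k$, so (b) gives $f_k \preceq t f_l$. By (a), $f_k \preceq H$.

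Second, for $l>k$, $f_k \preceq f_l$ gives $f_l \preceq t f_k$ by (b). For $l<k$, $f_l \preceq f_k$ gives $t f_l \preceq t f_k$. By (a), $H \preceq t f_k$.

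Now apply (a) to the sum $H+f_k$. From $f_k \preceq H$ and $H \preceq H$ we get $H+f_k \preceq H$. From $f_k \preceq t f_k$ and $H \preceq t f_k$ we get $H + f_k \preceq t f_k$. A further application of (a) gives
\[
G_k = H+f_k \preceq H + t f_k = G_{k+1}.
\]
Along the way this shows that every $G_k$ is real-rooted.

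\textbf{Closing the chain.} We have $G_0 = F$ and $G_m = tF$, up to the convention on the $l=0$ term, where $F = \sum_l f_l$ is real-rooted with nonnegative coefficients. By (b), $G_0 = F \preceq tF = G_m$. Lemma~\ref{firstlast_interlacing}, applied to the sequence $G_0, \ldots, G_m$, now gives $G_a \preceq G_b$ for all $a<b$. Specializing to $a = k_i$ and $b = k_j$ proves the claim.

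\textbf{Main obstacle.} The delicate point is organizing the comparison. Comparing the terms of $g_i$ and $g_j$ pairwise fails: $t f_a$ and $t f_b$ interlace in the wrong direction when $a>b$. Isolating the single term $f_k$ that gets multiplied by $t$ avoids this, because we then only need the two one-sided relations $f_k \preceq H \preceq t f_k$. Facts (a) and (b) will be cited rather than reproved, with the caveat that the degenerate cases ($f_l = 0$, or a common root at $0$) are handled by the conventions $0 \preceq f$ and $f \preceq 0$.
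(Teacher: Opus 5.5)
The paper does not prove this proposition itself. It is quoted from the literature (\cite{sav_vis}, \cite{unimodalitybeyond}), so your argument has to stand on its own, and its core does. Your consecutive step is correct. With $H=t\sum_{l<k}f_l+\sum_{l>k}f_l$ you obtain $f_k\preceq H\preceq tf_k$ from three ingredients: the hypothesis, the cone property, and the implication $f\preceq g\Rightarrow g\preceq tf$. The last one is where nonnegative coefficients, i.e.\ roots in $(-\infty,0]$, enter. Then $H+f_k\preceq H$, $H+f_k\preceq tf_k$ and one more use of the cone property give $G_k\preceq G_{k+1}$. Isolating the single summand that changes is the right way around the failure of the termwise comparison that you point out.

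What is wrong is the closing of the cycle. With the polynomials $f_1,\dots,f_m$ and thresholds $0\le k\le m$ you have $G_0=G_1=F$, but
\[
G_m \;=\; t\,(f_1+\cdots+f_{m-1})+f_m \;\neq\; tF .
\]
No convention about an $l=0$ term changes this. Indexing the polynomials as $f_0,\dots,f_m$, as the sums in the statement suggest, still gives $G_m=t\sum_{l<m}f_l+f_m$. Your identity would need the polynomials to be $f_0,\dots,f_{m-1}$, which contradicts the upper limit $m$ in the statement. The cycle closes one step later, at $G_{m+1}=t\sum_{l\le m}f_l=tF$.

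The repair costs nothing. Your consecutive step applies verbatim at $k=m$ (with $H=t\sum_{l<m}f_l$), so $G_1\preceq G_2\preceq\cdots\preceq G_{m+1}$ and $G_1=F\preceq tF=G_{m+1}$. Lemma~\ref{firstlast_interlacing} applied to $G_1,\dots,G_{m+1}$ then gives $G_a\preceq G_b$ for all $1\le a<b\le m+1$, and each $g_i$ is one of these. This extended family is in fact what the paper uses in the proof of Theorem~\ref{thm:interlacing}. There it needs $f_i\preceq g_{i,r}$ and $g_{i,r'}\preceq\tilde f_i$, i.e.\ comparisons with both endpoints $F$ and $tF$.

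A smaller point: if some $f_k=0$, then $f_k\preceq H$ carries no information, so that step does not establish real-rootedness of $H=G_k=G_{k+1}$. It is cleaner to discard zero polynomials at the outset, which only produces repetitions in the family $(G_k)$.
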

    Given a polynomial $h$ of degree at most $d$, the \textbf{$f$-polynomial} of $h$ with respect to degree $d$, denoted $f = f(d, h)$, is defined by
    \begin{equation*}
        f(t) \coloneqq (1 + t)^dh\left(\frac{t}{1 + t}\right).
    \end{equation*}
In order to prove Theorem~\ref{thm:realrooted}, we will consider the $f$-polynomials of $h^\ast$-polynomials as they have desirable properties outlined in the following. 
In particular, for polynomials with non-negative coefficients, it is well-known that passing to the $f$-polynomials preserves real-rootedness and interlacing in the following sense.

\begin{lemma}\label{lem:fromhtof}
    Let $h$ be a polynomial of degree at most $d$ and $f = f(d, h)$. Then
    \begin{itemize}
        \item[(i)] The $h$ is real-rooted with roots in $(-\infty, 0]$ if and only if $f$ is real-rooted with roots in $(-1, 0]$.

        \item[(ii)] If $g$ is a polynomial of degree at most  $d'$, then $g \preceq h$ with both having roots in $(-\infty, 0]$ if and only if $f(d', g) \preceq f$ with both having roots in $(-1, 0]$.
    \end{itemize}
\end{lemma}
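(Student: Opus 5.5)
The plan is to show that passing from $h$ to $f=f(d,h)$ acts on roots by the Möbius map $\varphi(r)=r/(1-r)$, plus extra roots at $-1$, and to track both effects.

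\textbf{Factorisation.} Write $h(t)=c\prod_{i=1}^{m}(t-r_i)$ with $m=\deg h\le d$ and $r_i\in\mathbb{C}$. Substituting $t/(1+t)$ and clearing denominators gives
\[
f(t)=c\,(1+t)^{d-m}\prod_{i=1}^{m}\bigl(t-r_i(1+t)\bigr)=c\,(1+t)^{d-m}\prod_{i=1}^{m}\bigl((1-r_i)t-r_i\bigr).
\]
If no $r_i$ equals $1$, the roots of $f$ are therefore:
\begin{itemize}
\item[(a)] the points $\varphi(r_i)=r_i/(1-r_i)$, and
\item[(b)] the point $-1$, with multiplicity $d-m$.
\end{itemize}
Conversely, $h(u)=(1-u)^d f\bigl(u/(1-u)\bigr)$, so the same computation inverts the correspondence.

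\textbf{Properties of $\varphi$.} The map $\varphi$ is a real Möbius transformation, so it is a bijection of the Riemann sphere that sends real points to real points and non-real points to non-real points. On $(-\infty,0]$ it is strictly increasing, since $\varphi'(r)=(1-r)^{-2}>0$. It maps $(-\infty,0]$ bijectively onto $(-1,0]$, with $0\mapsto 0$ and $-\infty\mapsto -1$.

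\textbf{Part (i).} If $h$ is real-rooted with all roots in $(-\infty,0]$, then every root of $f$ coming from (a) is real and lies in $(-1,0]$. Conversely, if $f$ is real-rooted with roots in $(-1,0]$, pull each root back through $\varphi^{-1}(u)=u/(1+u)$ using the inverse formula. Roots in $(-1,0]$ pull back to real roots in $(-\infty,0]$. A root of $f$ at $-1$ is exactly a drop in the degree of $h$ and contributes no root of $h$.

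There is one bookkeeping point. Literally, $f$ has the root $-1$ with multiplicity $d-\deg h$, so the interval in (i) must be read as the roots coming from roots of $h$ (equivalently, as $[-1,0]$ with $-1$ allowed exactly that many times). I would state the proof in this form, noting that it reduces to the literal statement when $\deg h=d$.

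\textbf{Part (ii).} Let $r_1\ge r_2\ge\cdots$ be the roots of $g$ and $s_1\ge s_2\ge\cdots$ the roots of $h$. Because $\varphi$ is increasing, it preserves every inequality in the chain $\cdots\le r_2\le s_2\le r_1\le s_1$. Hence the roots of $f(d',g)$ and of $f$ that come from (a) satisfy the same interlacing chain, and the converse follows from $\varphi^{-1}$.

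It remains to handle the roots at $-1$ from (b). These are smaller than every other root, so they are appended at the bottom of both sorted lists. Interlacing survives this exactly when the lengths of the two lists stay compatible, namely $\deg f(d',g)\le \deg f\le \deg f(d',g)+1$. In the applications this corresponds to $d'\in\{d,d-1\}$. Under that condition, repeated values $-1$ at the tail of both lists interlace trivially, and I would verify this by a short case check on the parity of the tail lengths. The degenerate convention that $0\preceq f$ and $f\preceq 0$ is immediate, since $f(d',0)=0$.

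\textbf{Main obstacle.} I expect the only real difficulty to be this degree and multiplicity bookkeeping at $-1$, in both (i) and (ii). The transfer of roots through $\varphi$ itself is routine.
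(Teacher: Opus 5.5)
The paper gives no proof of this lemma; it quotes it as well known. Your Möbius-map argument via $\varphi(r)=r/(1-r)$ is the standard route, and you are right that the printed statement needs repair: $f$ has roots at $-1$ when $\deg h<d$, and (ii) needs $d'\le d\le d'+1$. However, you handle only one of two symmetric degeneracies. Since $\varphi(1)=\infty$, a root of $h$ at $1$ produces no root of $f$ but lowers $\deg f$. Indeed, the $t^d$-coefficient of $f(t)=\sum_k h_k t^k(1+t)^{d-k}$ is $h(1)$. Writing $f(t)=c'\prod_j(t-q_j)$, the inverse formula gives $h(u)=(1-u)^d f\bigl(u/(1-u)\bigr)=c'(1-u)^{d-\deg f}\prod_j\bigl((1+q_j)u-q_j\bigr)$. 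In the forward directions this is harmless, because roots in $(-\infty,0]$ exclude $r_i=1$. In the converse of (i), though, ``pull each root back through $\varphi^{-1}$'' does not produce all roots of $h$. It misses the factor $(1-u)^{d-\deg f}$, and nothing in the hypothesis on $f$ rules that factor out. The same gap sits in your converse of (ii), which ``follows from $\varphi^{-1}$''.

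Concretely, your reformulated (i) is still false. Take $d=1$ and $h(t)=1-t$; then $f\equiv 1$. This $f$ has no roots, so it vacuously satisfies every version of the right-hand side, while $h$ has the root $1\notin(-\infty,0]$. The same $h$ with $d'=0$, $g=1$ breaks the converse of (ii): $f(0,g)=1\preceq 1=f(1,h)$ and $0\le 1\le 0+1$, yet $h$ has the root $1\notin(-\infty,0]$. The missing hypothesis is $\deg f=d$, equivalently $h(1)\neq 0$, and in (ii) also $\deg f(d',g)=d'$. Under it, every root of $h$ is $\varphi^{-1}$ of a root of $f$ other than $-1$, and your pull-back and interlacing arguments go through as written. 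The hypothesis holds in every use in the paper, since there $h$ is a nonzero polynomial with nonnegative coefficients, so $h(1)>0$. Separately, the tails of $-1$'s need no parity case check. Once $d'\le d\le d'+1$ and $\deg g\le \deg h\le \deg g+1$, appending copies of the minimum $-1$ to both sorted root lists preserves the interlacing chain directly.
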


The $f$-polynomial also interacts well with respect to Hadamard products of formal power series. Given two formal power series $A, B$ defined by
\begin{equation*}
    A = \sum_{n \geq 0} a_n t^n \quad \text{ and } \quad B = \sum_{n \geq 0} b_n t^n,
\end{equation*}
their \textbf{Hadamard product} $A \ast B$ is the formal power series
\begin{equation*}
    A \ast B \coloneqq \sum_{n \geq 0} a_nb_n t^n.
\end{equation*}
If the coefficients sequence $a_n$ is given by the evaluation of a polynomial of degree at most $d$ in~$n$ then there exists a polynomial $h$ of degree at most $d$ such that
\[
A \ = \ \frac{h(t)}{(1-t)^{d+1}}\, .
\] See, e.g., \cite[Section 4.3]{enumcomb1}. We will consider the numerator polynomial of formal power series arising from counting lattice points in $s$-lecture hall polytopes under Hadamard products. The corresponding operation on $f$-polynomials is the diamond product. 
For polynomials $f, g$, their \textbf{diamond product} is defined by
\begin{equation*}
    (f \diamond g)(t) \coloneqq \sum_{i \geq 0} \frac{f^{(i)}(t)}{i!} \frac{g^{(i)}(t)}{i!} t^i (t + 1)^i
\end{equation*}
where $f^{(i)}(t)$ is the $i$-th order derivative of $f$. 
The relation between Hadamard products and diamond products that we use is as follows.

\begin{theorem}{\cite[Theorem 2.3]{wagner}}\label{thm:wagner}
    If $h, h_1$, and $h_2$ are polynomials of degrees at most $d, d_1$, and $d_2$ respectively, and
    \begin{equation*}
        \frac{h(t)}{(1 - t)^{d + 1}} = \frac{h_1(t)}{(1 - t)^{d_1 + 1}} \ast \frac{h_2(t)}{(1 - t)^{d_2 + 1}},
    \end{equation*}
    then we have $f(d, h) = f(d_1, h_1) \diamond f(d_2, h_2)$.
\end{theorem}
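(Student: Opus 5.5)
The plan is to reduce the statement to an identity between products of binomial coefficients, using the fact that the $f$-polynomial is simply the coordinate vector of the coefficient polynomial in the binomial basis. Write
\[
\frac{h(t)}{(1-t)^{d+1}}=\sum_{n\ge 0}p(n)t^n
\]
with $\deg p\le d$, and expand $p(n)=\sum_{i=0}^{d}c_i\binom{n}{i}$.

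The first step is to check that $f(d,h)=\sum_i c_i t^i$. This uses $\sum_{n}\binom{n}{i}t^n = t^i/(1-t)^{i+1}$. Clearing to the common denominator $(1-t)^{d+1}$ gives $h(t)=\sum_i c_i t^i(1-t)^{d-i}$. Substituting $t\mapsto t/(1+t)$ gives $1-t\mapsto 1/(1+t)$. Multiplying by $(1+t)^d$ then leaves exactly $\sum_i c_it^i$. In particular $f(d,h)$ depends only on $p$, not on the chosen bound $d$.

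The Hadamard product of the two series has coefficient sequence $p_1(n)p_2(n)$, which is a polynomial of degree at most $d_1+d_2$. Taking $d$ to be any admissible bound, the first step reduces the theorem to showing that the binomial-basis coordinates of $p_1p_2$ form the polynomial $f(d_1,h_1)\diamond f(d_2,h_2)$. Both sides are bilinear in $(p_1,p_2)$, and the diamond product is bilinear in $(f,g)$. It therefore suffices to treat basis elements $p_1=\binom{n}{i}$ and $p_2=\binom{n}{j}$, whose $f$-polynomials are $t^i$ and $t^j$.

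For the basis case, compute the diamond product directly:
\[
t^i\diamond t^j=\sum_{l\ge 0}\binom{i}{l}\binom{j}{l}t^{i+j-l}(1+t)^l .
\]
Its coefficient of $t^k$ is $\sum_l\binom{i}{l}\binom{j}{l}\binom{l}{k-i-j+l}$. On the other side, count pairs $(A,B)$ of subsets of $[n]$ with $|A|=i$ and $|B|=j$ by the size $k$ of $A\cup B$. This gives
\[
\binom{n}{i}\binom{n}{j}=\sum_k \binom{n}{k}\binom{k}{i}\binom{i}{k-j},
\]
since for a fixed $k$-set $U$ one chooses $A\subseteq U$ and then $B$ must contain $U\setminus A$ together with $i+j-k$ elements of $A$. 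So the theorem reduces to the identity
\[
\sum_l\binom{i}{l}\binom{j}{l}\binom{l}{k-i-j+l}=\binom{k}{i}\binom{i}{k-j}.
\]

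This identity is the main obstacle. I would prove it by a second double count of the same pairs $(A,B)$ covering a fixed $k$-set. Interpret $l$ as the size of a chosen subset $L\subseteq A$ matched with $B$: pick $L\subseteq A$ of size $l$ and a partner set of size $l$ in $B$, then take the sign-free expansion of $(1+t)^l$. Alternatively, I would prove it by a generating-function manipulation in $i$, for example using the Vandermonde convolution to rewrite $\binom{j}{l}\binom{l}{m}=\binom{j}{m}\binom{j-m}{l-m}$ and then summing over $l$. If neither closes cleanly, a fallback is to verify that both sides satisfy the same Pascal-type recursion in $j$, with equal initial values at $j=0$, where both sides equal $\delta_{k,i}$.
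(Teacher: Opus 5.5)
The paper does not prove this statement; it cites it from Wagner, so there is no in-paper argument to compare with. Your reduction is correct and is the natural route, transporting multiplication through the binomial basis. It has four correct ingredients:
\begin{itemize}
\item $f(d,h)$ is the coordinate vector of the coefficient polynomial $p$ under $\binom{n}{i}\mapsto t^i$, hence independent of $d$.
\item The hypothesis forces $p=p_1p_2$.
\item Both sides are bilinear in $(p_1,p_2)$.
\item For $p_1=\binom{n}{i}$, $p_2=\binom{n}{j}$, your formula for $t^i\diamond t^j$ and the expansion $\binom{n}{i}\binom{n}{j}=\sum_k\binom{n}{k}\binom{k}{i}\binom{i}{k-j}$ are both right.
\end{itemize}

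The one step you have not carried out is the binomial identity you call the main obstacle; you only list strategies. The first one (a ``partner set'' and a ``sign-free expansion of $(1+t)^l$'') specifies no bijection. The recursion in your fallback is never written down. Neither counts as a proof as it stands.

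Your second strategy does work, provided you first make the lower index constant. Put $c=i+j-k$. If $c<0$, or if $c>j$ (i.e.\ $k<i$), every term on both sides vanishes. Otherwise $\binom{l}{k-i-j+l}=\binom{l}{c}$. Using $\binom{j}{l}\binom{l}{c}=\binom{j}{c}\binom{j-c}{l-c}$ followed by Vandermonde,
\[
\sum_l\binom{i}{l}\binom{j}{l}\binom{l}{c}
=\binom{j}{c}\sum_l\binom{i}{i-l}\binom{j-c}{l-c}
=\binom{j}{c}\binom{i+j-c}{i-c}
=\binom{j}{i+j-k}\binom{k}{j}.
\]
Finally,
\[
\binom{k}{j}\binom{j}{i+j-k}=\binom{k}{i}\binom{i}{i+j-k}=\binom{k}{i}\binom{i}{k-j}.
\]
Both products count the ways of writing a fixed $k$-set as $A\cup B$ with $|A|=i$ and $|B|=j$, choosing $B$ first or $A$ first, exactly as in your own count. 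With this inserted, your argument is a complete, self-contained proof of the cited theorem.
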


The proof of our main theorem relies on the preservation of the real-rootedness  and interlacing property under the diamond product described by the following results.

\begin{theorem}{\cite[Theorem 0.3]{wagner}}
    If two polynomials $f,g$ have all their roots in the interval $[-1,0]$ then so does $f\diamond g$.
\end{theorem}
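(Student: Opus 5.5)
The plan is to first treat the case where one factor is linear, then reduce the general case to it by induction on $\deg f$. The diamond product is bilinear and symmetric in $f$ and $g$. So it is enough to show that for fixed $g$ with roots in $[-1,0]$, the linear operator $L_g\colon f\mapsto f\diamond g$ maps polynomials with roots in $[-1,0]$ to polynomials with roots in $[-1,0]$. I also have to track degrees, including roots at $-1$ and $0$ and the degenerate case of the zero polynomial.

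\emph{Linear case.} For $f(t)=t+a$ with $a\in[0,1]$, only the terms $i=0,1$ of the defining sum survive, so
\[
(t+a)\diamond g \ = \ (t+a)g+t(t+1)g' \, .
\]
A direct computation gives, for $t\in(0,\infty)$ and then by continuation,
\[
\frac{d}{dt}\bigl(t^{a}(1+t)^{1-a}g(t)\bigr) \ = \ t^{a-1}(1+t)^{-a}\bigl((t+a)g+t(t+1)g'\bigr) \, .
\]
Work on $(-1,0)$ with $|t|^a|1+t|^{1-a}$ instead. This function vanishes at $-1$ and at $0$ (or, when $a\in\{0,1\}$, one of the endpoints becomes a genuine root of the product). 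Rolle's theorem then gives a root of $(t+a)\diamond g$ strictly between any two consecutive roots of $g$ in $[-1,0]$, as well as between $-1$ and the smallest root and between the largest root and $0$. Multiple roots of $g$ persist with multiplicity reduced by one. Counting these roots against the degree $\deg g+1$ shows that all roots lie in $[-1,0]$. The same computation shows that $g\preceq (t+a)\diamond g$, which is useful for the next step.

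\emph{Induction on $\deg f$.} Write $f=(t+a)\tilde f$ with $a\in[0,1]$ and $\tilde f$ having roots in $[-1,0]$. Using Leibniz's rule for $f^{(i)}/i!$, I would derive an identity of the form
\[
\bigl((t+a)\tilde f\bigr)\diamond g \ = \ (t+a)\,(\tilde f\diamond g) + t(t+1)\,\bigl(\tilde f\diamond g'\bigr) + (\text{correction}) \, ,
\]
and aim to express the right-hand side as $\tilde f\diamond h$ plus terms that can be controlled by interlacing. Here $h=(t+a)\diamond g$ has roots in $[-1,0]$ by the linear case. Naively iterating the operators $T_a\colon g\mapsto (t+a)g+t(t+1)g'$ does \emph{not} give $f\diamond g$: the $i=2$ coefficients already differ by a factor of $2$. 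So the identity must carry a genuine correction term. I would try to show that the correction is a nonnegative combination of polynomials that are interlaced by $\tilde f\diamond g$. Then the interlacing tools of this section (Lemma~\ref{firstlast_interlacing} and Proposition~\ref{prop:interlacingsequences}), transported through Lemma~\ref{lem:fromhtof}, would keep the sum real-rooted with roots in $[-1,0]$.

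\emph{Main obstacle and fallback.} The induction step is where I expect the difficulty, since there is no simple multiplicativity of $\diamond$. If the algebraic identity does not close up, my fallback is the Borcea--Brändén characterization of linear operators preserving real-rootedness in an interval. After the Möbius change of variables $t\mapsto t/(1-t)$, which sends $[-1,0]$ to $(-\infty,0]$ (this is the substitution behind Lemma~\ref{lem:fromhtof}), one must check that the symbol of $L_g$ is stable. By Theorem~\ref{thm:wagner}, on the $h$-side $L_g$ is a Hadamard product with the fixed series $h_2/(1-t)^{d_2+1}$. Its coefficients form a sequence whose generating function has only nonpositive real zeros in the numerator, which suggests a Pólya--Schur multiplier-sequence argument. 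The key remaining point is verifying that the polynomial sequence $n\mapsto \mathrm{E}(n)$ attached to such $h_2$ acts as a multiplier sequence on the relevant class.
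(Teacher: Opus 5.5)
The paper does not prove this statement; it quotes it from Wagner. So your argument has to stand on its own, and as written it covers only a special case.

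\emph{What works.} Your linear case is fine. The identity $\frac{d}{dt}\bigl(t^a(1+t)^{1-a}g\bigr)=t^{a-1}(1+t)^{-a}\bigl((t+a)g+t(t+1)g'\bigr)$ is correct. The Rolle count also works, once you note that roots of $g$ at $-1$ and $0$ keep their full multiplicity rather than losing one. It also gives $g\preceq(t+a)\diamond g$.

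\emph{Why it does not bootstrap.} Put $E_f(x)=\sum_i f_i\binom{x}{i}$ for $f=\sum_i f_it^i$, so that $\sum_{n\ge0}E_f(n)t^n=h(t)/(1-t)^{d+1}$ when $f=f(d,h)$. Theorem~\ref{thm:wagner} then says exactly that $E_{f\diamond g}=E_fE_g$. Hence $\diamond$ is associative, and $(t+c)\diamond(\cdot)$ is multiplication by $x+c$. Iterating your lemma therefore covers exactly those $g$ with $E_g=\lambda\prod_j(x+c_j)$ and $c_j\in[0,1]$. Already $g=t^2$, with $E_g=x(x-1)/2$, is not of this form. So even $t^2\diamond t^2=t^2(6t^2+6t+1)$ is out of reach.

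\emph{The fallback is circular.} By Theorem~\ref{thm:wagner}, its ``key remaining point'' (that $n\mapsto\mathrm{E}(n)$ acts as a multiplier sequence on the relevant class) \emph{is} the statement to be proved. Classical P\'olya--Schur theory concerns real-rooted polynomials and entire functions, not rational series $h/(1-t)^{D+1}$. Also, the M\"obius map taking $(-1,0]$ onto $(-\infty,0]$ is $t\mapsto t/(1+t)$, not $t\mapsto t/(1-t)$.

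\emph{The induction step.} This is where the missing idea is, and Leibniz does not give the identity you hope for. Exactly,
\[
\bigl((t+a)\tilde f\bigr)\diamond g=(t+a)(\tilde f\diamond g)+t(t+1)\sum_{j\ge0}\frac{1}{j+1}\,\frac{\tilde f^{(j)}}{j!}\,\frac{(g')^{(j)}}{j!}\,\bigl(t(t+1)\bigr)^j .
\]
Compared with $t(t+1)(\tilde f\diamond g')$, the correction is $-t(t+1)\sum_{j\ge1}\frac{j}{j+1}(\cdots)$. It enters with a minus sign and is not a diamond product of polynomials you control. So the common-interlacer argument via Lemma~\ref{firstlast_interlacing} and Proposition~\ref{prop:interlacingsequences} has nothing to act on.

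\emph{An ingredient that works.} One is the Grace--Walsh--Szeg\H{o} coincidence theorem. Fix $t_0\notin[-1,0]$ with $\mathrm{Im}\,t_0\ge0$. Let $2\psi=\arg t_0+\arg(t_0+1)$, with both arguments in $[0,\pi]$. Then $C=\{a:\mathrm{Re}((t_0+a)e^{-i\psi})>0\}$ is an open half-plane containing $[0,1]$. Write $f=c\prod_j(t+a_j)$ and $g=c'\prod_l(t+b_l)$. The value $(f\diamond g)(t_0)$ is symmetric and multi-affine in the $a_j$, and separately in the $b_l$. So you may replace $f,g$ by $(t+a)^m,(t+b)^k$ with $a,b\in C$. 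For $m\le k$, with $\alpha=t_0+a$ and $\beta=t_0+b$,
\[
\bigl((t+a)^m\diamond(t+b)^k\bigr)(t_0)=\beta^{k-m}(\alpha\beta)^m\,P\Bigl(\frac{t_0(1+t_0)}{\alpha\beta}\Bigr),\qquad P(z)=\sum_i\binom{m}{i}\binom{k}{i}z^i .
\]
By Mal\'o's theorem, $P$ has only negative zeros. The argument of $\alpha\beta$ lies strictly within $\pi$ of $2\psi$, while $-t_0(1+t_0)$ has argument $2\psi+\pi$. Hence this value is nonzero, and all roots of $f\diamond g$ lie in $[-1,0]$.
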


\begin{theorem}{\cite[Theorem 12]{petter_rrns}}\label{diamond_interlacing}
    Let $f, g, h$ be real-rooted polynomials, and let $h$ have zeroes in $[-1, 0]$. 
    Then $f \diamond h$ is real-rooted, and if $f \preceq g$, then $f \diamond h \preceq g \diamond h$.
\end{theorem}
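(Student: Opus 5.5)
The plan is to view $T_h\colon f\mapsto f\diamond h$ as a linear operator on $\R[t]$ and deduce both claims from one fact: $T_h$ preserves real-rootedness whenever $h$ has all its zeros in $[-1,0]$. The interlacing claim then follows from the Hermite--Kakeya--Obreschkoff theorem. Two real-rooted polynomials $f,g$ interlace (in one order or the other) if and only if $f+\lambda g$ is real-rooted for every $\lambda\in\R$. By linearity, $T_h(f)+\lambda T_h(g)=T_h(f+\lambda g)$, so real-rootedness of $T_h(f)$ and $T_h(g)$ together with real-rootedness of every combination gives that $f\diamond h$ and $g\diamond h$ interlace in some order. To fix the order, I will use the standard fact (Borcea--Brändén) that a real-rootedness preserver either preserves the relation $\preceq$ for all pairs or reverses it for all pairs. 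It is then enough to test one explicit pair. I would take $f=1\preceq g=t$ and check directly that $h\preceq t\diamond h = th+t(t+1)h'$. Since $h$ has roots in $[-1,0]$, this is a routine sign-alternation argument at the roots of $h$; multiple roots can be handled by perturbation and continuity.

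The first claim, real-rootedness of $f\diamond h$, is the main obstacle, since $f$ is an arbitrary real-rooted polynomial and not one with roots in $[-1,0]$. My first attempt is the Borcea--Brändén characterization of real-rootedness preservers on polynomials of degree at most $n$. This reduces the question to showing that the symbol
\[
\sum_{i\geq 0}\binom{n}{i}(t+w)^{n-i}\frac{h^{(i)}(t)}{i!}\,t^i(t+1)^i
\]
is real stable in $(t,w)$, or becomes so after $w\mapsto -w$. To get at this, I would factor $h=c\prod_j(t+a_j)$ with $a_j\in[0,1]$ and polarize. Using $\sum_i h^{(i)}(t)y^i/i!=h(t+y)$, the symbol can be written as a multiaffine expression in variables associated with the roots $-a_j$ and with the $n$ copies of $t+w$. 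Stability would then follow from the stability of the elementary building block
\[
(t+w)(t+a)+t(t+1),
\]
which is the symbol for $n=1$ and a single linear factor of $h$. This building block should be checkable by hand for $a\in[0,1]$, and stability is preserved under products and under the diagonalization/contraction steps of the polarization.

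If the polarization bookkeeping becomes unwieldy, my fallback is a reduction to Wagner's theorem that $\diamond$ preserves roots in $[-1,0]$. Every real-rooted $f$ can be moved, via the change of variable underlying Theorem~\ref{thm:wagner}, into the Hadamard-product picture. There, multiplying the coefficient sequence by the values of a polynomial whose $f$-polynomial has roots in $[-1,0]$ corresponds to a Pólya--Schur multiplier sequence. A Pólya--Schur type argument would then give real-rootedness for arbitrary real-rooted inputs, which is the same statement as the claim that $T_h$ preserves real-rootedness.
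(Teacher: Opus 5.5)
The paper does not prove this statement; it is quoted from Br\"and\'en. The interlacing half of your plan is sound once the first half is in place. If $T_h\colon f\mapsto f\diamond h$ preserves real-rootedness, the Borcea--Br\"and\'en dichotomy plus one test pair fixes the orientation. Your test pair works: since $t\diamond h=t\,\bigl((t+1)h\bigr)'$, Rolle's theorem applied to $(t+1)h$, whose zeros lie in $[-1,0]$, gives $h\preceq t\diamond h$.

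\textbf{The gap is the first claim}, which is the whole content of the theorem beyond Wagner's result. Your polarization route does not establish it. The symbol
$G_n(t,w)=\sum_i\binom{n}{i}(t+w)^{n-i}\frac{h^{(i)}(t)}{i!}t^i(t+1)^i$
is not a product, or a diagonalization of a product, of blocks $(t+w)(t+a)+t(t+1)$. The reason is that $h\mapsto T_h$ is not multiplicative: $1\diamond(h_1h_2)=h_1h_2$, whereas $(1\diamond h_1)\diamond h_2=h_1\diamond h_2$, and for example $t\diamond t=t^2+t(t+1)$. Concretely, for $h=c\prod_{j=1}^m(t+a_j)$ the coefficient of $(t(t+1))^i$ in $G_n$ is $c\binom{n}{i}(t+w)^{n-i}e_{m-i}(t+a_1,\dots,t+a_m)$. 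This is a sum over pairs of $i$-subsets of $[n]$ and $[m]$, a Hadamard/Schur--Szeg\H{o}-type pairing. A product of blocks instead pairs each factor $t+w$ with one fixed $t+a_j$. Already for $n=m=2$, the product $\prod_{j=1}^{2}\bigl[(t+w)(t+a_j)+t(t+1)\bigr]$ has middle term $(t+w)(2t+a_1+a_2)\,t(t+1)$, whereas $G_2$ has twice that. Correcting such weights is a coefficientwise operation in $t(t+1)$, which is not an independent variable. Averaging stable polynomials does not preserve stability either. So the stability of the single block, which you check correctly, does not propagate to $G_n$.

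The fallback fails for a related reason. In the binomial-basis picture behind Theorem~\ref{thm:wagner}, $p_f(n)=\sum_k f_k\binom{n}{k}$ satisfies $p_{f\diamond h}=p_f\,p_h$. Moreover $\{p_h(n)\}_{n\geq 0}$ is indeed a multiplier sequence, since $\sum_n p_h(n)x^n/n!=e^x\sum_k h_kx^k/k!$. But the sequence it multiplies, $\{p_f(n)\}$, is the coefficient sequence of $h_f(t)/(1-t)^{d+1}$, not of a real-rooted polynomial or a Laguerre--P\'olya function. The natural way to apply P\'olya--Schur is through $e^x\sum_k f_kx^k/k!$, and that only shows that $\sum_k (f\diamond h)_k\,x^k/k!$ is real-rooted. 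Passing back to $f\diamond h$ itself would need $\{k!\}$ to be a multiplier sequence, which it is not. What is missing is a genuine proof that $f\mapsto f\diamond h$ preserves real-rootedness for arbitrary real-rooted $f$, for instance stability of $G_n(t,w)$ for every $n$, as in Br\"and\'en's refinement of Schur's method in the cited source. Neither of your two routes supplies it.
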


\subsection{Rooted trees.}
We can now move on to the proof of \Cref{thm:realrooted}, for which we make the following definitions. 
For the rest of this section, we let $T$ be a poset given by a rooted (hanging) tree on $[d]$ with root $v\in [d]$ and $s$ be a map $s \colon [d] \rightarrow \Z_+$. See Figure~\ref{fig:hangingtree} for an example. We consider the cone
\[
C_T \ := \ \left\{x\in \mathbb{R}_{\geq 0}^d \colon     \begin{aligned}
      & f(i)/s(i)\leq f(j)/s(j) \text{ whenever } i\prec j \, , \text{ and }\\
      & f(i)/s(i)<f(j)/s(j) \text{ whenever } i\prec j \text{ and } i>j
    \end{aligned} \right\}
\]
of all (real-valued) $(T,s)$-partitions. For given integers $0 \leq r < s(v)$, we consider the function 
        \begin{equation*}
            p_{T, r}(n) \coloneqq |\{x \in C_T \cap \Z^d \colon x(v) = ns(v) + r\} \, , \quad n\geq 0 \, .
        \end{equation*}
        The proof of Theorem~\ref{thm:realrooted} will follow from the following key result.
        \begin{theorem}\label{thm:interlacing}
            For all integers $r,r'$, $0 \leq r'<r < s(v)$,
            \begin{itemize}
                \item[(i)] the function $p_{T,r}(n)$ is given by a polynomial of degree $d-1$, and
                \item[(ii)] we have $A_{T, r} \preceq A_{T, r'}$, where 
                        \begin{equation*}
            \sum_{n \geq 0} p_{T, r}(n) t^n = \frac{A_{T, r}(t)}{(1 - t)^d}\, .
        \end{equation*}
            \end{itemize}           
        \end{theorem}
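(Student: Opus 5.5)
We argue by induction on $d$, peeling off one child subtree of the root at a time. Since $T$ is hanging, the root $v$ is the maximum of $T$. If $d=1$, then $p_{T,r}(n)=1$ for all $r$, so $A_{T,r}=1$ for every $r$, and (i) and (ii) hold trivially. Otherwise let $u$ be a child of $v$ and let $T_1$ be the subtree hanging from $u$, with $d_1$ elements. Let $T'=T\setminus T_1$, which is again a rooted tree with root $v$ and $d-d_1$ elements. The only relation in $T$ that links $T_1$ and $T'$ is $u\prec v$. Hence
\[
p_{T,r}(n)=p_{T',r}(n)\cdot c_r(n),
\]
where $c_r(n)$ is the number of $y\in C_{T_1}\cap\Z^{d_1}$ with $y(u)/s(u)\le n+r/s(v)$ (strict if $u>v$). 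Writing $y(u)=m\,s(u)+q$ with $0\le q<s(u)$, this becomes
\[
c_r(n)=\sum_{m<n}\sum_{q}p_{T_1,q}(m)+\sum_{q\le k(r)}p_{T_1,q}(n),
\]
where $k(r)$ is $\lfloor s(u)r/s(v)\rfloor$ or $\lceil s(u)r/s(v)\rceil-1$ depending on strictness. In either case $k(r)$ is weakly increasing in $r$.

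By induction each $p_{T_1,q}$ is a polynomial of degree $d_1-1$. Therefore $c_r$ is a polynomial of degree $d_1$, and $p_{T,r}$ has degree $(d-d_1-1)+d_1=d-1$, which proves (i). For generating functions, the partial-sum term contributes $\tfrac{t}{1-t}\sum_q A_{T_1,q}/(1-t)^{d_1}$, so
\[
\sum_n c_r(n)t^n=\frac{\sum_{q\le k(r)}A_{T_1,q}(t)+t\sum_{q>k(r)}A_{T_1,q}(t)}{(1-t)^{d_1+1}}.
\]
Reindex $f_l:=A_{T_1,s(u)-1-l}$. By the induction hypothesis (ii) for $T_1$, we have $f_i\preceq f_j$ for $i<j$. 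The numerator above then has the form $t\sum_{l<k'}f_l+\sum_{l\ge k'}f_l$, where $k'=s(u)-1-k(r)$ is weakly decreasing in $r$. By Proposition~\ref{prop:interlacingsequences}, the numerators $B_r$ satisfy $B_r\preceq B_{r'}$ for $r'<r$. They have nonnegative coefficients, hence all roots lie in $(-\infty,0]$.

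Next pass to $f$-polynomials. The series for $p_{T,r}$ is the Hadamard product of the series for $p_{T',r}$ and for $c_r$. By Theorem~\ref{thm:wagner},
\[
f(d-1,A_{T,r})=f(d-d_1-1,A_{T',r})\diamond f(d_1,B_r).
\]
Lemma~\ref{lem:fromhtof} turns both families into pairwise interlacing families with roots in $(-1,0]$. Theorem~\ref{diamond_interlacing} gives real-rootedness of each product and interlacing when one factor is held fixed. Transporting back via Lemma~\ref{lem:fromhtof} then yields (ii).

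The main obstacle is this last step, since both factors vary with $r$. Theorem~\ref{diamond_interlacing} directly gives only the two chains
\[
F_r\diamond G_r\preceq F_{r'}\diamond G_r\preceq F_{r'}\diamond G_{r'},
\]
and interlacing is not transitive. I would first try the characterization that a family of nonnegative-coefficient polynomials is pairwise interlacing if and only if every nonnegative combination of them is real-rooted. Combined with bilinearity of $\diamond$, this reduces the problem to showing that
\[
\Big(\sum_r\lambda_r F_r\Big)\diamond G\preceq\Big(\sum_r\mu_r F_r\Big)\diamond G
\]
for the relevant $G$. Failing that, I would strengthen the induction hypothesis to a common interlacer for the whole family $\{A_{T,r}\}_r$, which is preserved by Proposition~\ref{prop:interlacingsequences} and by $\diamond$. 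Alternatively, I would use Lemma~\ref{firstlast_interlacing} after checking consecutive pairs $r+1,r$ and the first/last pair, where the monotone step structure of $k(r)$ makes only one block of terms change at a time.
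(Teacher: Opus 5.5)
Your factorization $p_{T,r}=p_{T',r}\,c_r$, your proof of (i), the formula for $\sum_n c_r(n)t^n$, and the interlacing $B_r\preceq B_{r'}$ via Proposition~\ref{prop:interlacingsequences} are all correct. Part (ii), however, is not proved. You rightly note that in $f(d-1,A_{T,r})=F_r\diamond G_r$ both factors move with $r$, so Theorem~\ref{diamond_interlacing} only gives the non-transitive chain $F_r\diamond G_r\preceq F_{r'}\diamond G_r\preceq F_{r'}\diamond G_{r'}$. None of your three fallback strategies closes this.

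\begin{itemize}
\item The first relies on a false equivalence. Real-rootedness of all nonnegative combinations characterizes having a common interlacer, not being an interlacing family. For example, $(x+1)(x+10)$ and $(x+\tfrac32)(x+3)$ have the common interlacer $x+2$ but do not interlace in either order. Moreover, $\sum_r\lambda_r F_r\diamond G_r$ does not reduce to a fixed $G$.
\item The second at best produces a common interlacer, which does not give the ordered relation $A_{T,r}\preceq A_{T,r'}$.
\item The third names the right lemma, but two things are missing. In your two-factor split, consecutive pairs $r+1,r$ still change both $F$ and $G$. And you give no reason why the extreme pair $A_{T,s(v)-1}$, $A_{T,0}$ should interlace, which is exactly the difficulty.
\end{itemize}

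The missing idea, used in the paper, is to close the chain at artificial endpoints whose interlacing is trivial, not at the extreme members of the family itself.

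\begin{itemize}
\item Split $T$ at all children $v_1,\dots,v_m$ of the root at once. Then $r$ enters only through the cut-offs $k_i$, and $f(d-1,A_{T,r})=g_{1,r}\diamond\cdots\diamond g_{m,r}$.
\item Proposition~\ref{prop:interlacingsequences}, used with the full range of cut-offs, places every $g_{i,r}$ in one interlacing family. This family runs from $f_i=f(d_i,\sum_qA_{T_i,q})$ (no factor $t$) to $\tilde f_i=f(d_i,t\sum_qA_{T_i,q})$ (every term multiplied by $t$).
\item Replacing one factor at a time gives a single chain from $f_1\diamond\cdots\diamond f_m$, through $g_{1,r}\diamond\cdots\diamond g_{m,r}$ and $g_{1,r'}\diamond\cdots\diamond g_{m,r'}$, to $\tilde f_1\diamond\cdots\diamond\tilde f_m$. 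Each link is supplied by Theorem~\ref{diamond_interlacing}.
\item The two endpoints interlace. A Hadamard product of series that are each multiplied by $t$ equals $t$ times the Hadamard product, so on the $h$-side the endpoints are $h$ and $th$, and $h\preceq th$.
\item Lemma~\ref{firstlast_interlacing} then yields the interlacing of the two middle members, and Lemma~\ref{lem:fromhtof} transfers it back to $A_{T,r}\preceq A_{T,r'}$.
\end{itemize}

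In your formulation the root factor $F_r$ comes with no such trivially related endpoints. Closing your chain would need an extra relation, something like $A_{T',0}\preceq tA_{T',s(v)-1}$, which is not part of your induction hypothesis. So you must either unroll $T'$ into all subtrees of the root, as above, or strengthen the induction hypothesis and prove that the stronger statement propagates.
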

        In particular, $A_{T, r}$ is real-rooted. This will prove \Cref{thm:realrooted} by the following argument.\begin{proof}[Proof of~\Cref{thm:realrooted}]
            Set $T'$ to be the rooted tree obtained from $T$ by adding a root $d + 1$ above $v$. 
Define $s' : [d + 1] \rightarrow \Z_+$ by $s'(i) = s(i)$ for all $i \in [d]$ and $s(d + 1) = 1$. Then $p_{T',0}= |\{x\in C_{T'}\cap \mathbb{Z}^{d+1}\colon x(d+1)=n\}|=|n\mathcal{O}(T,s)\cap \mathbb{Z}^d|$ which gives $A_{T', 0} = h^\ast_{\mathcal{O}(T, s)}$.
        \end{proof}

\begin{remark}
    A combinatorial interpretation can be given to the polynomials $A_{T,r}$. Using the fact that $v$ is the maximal element of $T$, one can obtain an expression for $A_{T, r}$ as a special case of \cite[Theorem 3.2]{lec_poset}. 
    Using the same notations as in \cite{lec_poset}, we have
    \begin{equation*}
        A_{T, r}(t) = \sum_{\substack{\tau = (\pi, \tilde r) \in \mathcal{L}(T, s)\\ \tilde r(v) = r}} t^{|D_1(\tau)|}.
    \end{equation*}
\end{remark}

\begin{proof}[Proof of~\Cref{thm:interlacing}]   
    The proof is by induction on $d$. If $d = 1$ then $C_{T}$ is the positive real axis and $p_{T,r}\equiv 1$, thus the claim holds. Now assume $d >1$ and that the coatoms of $T$ are $v_1, v_2, \ldots, v_m$. 
    Let $T_1, T_2, \ldots, T_m$ be the sub-trees corresponding to them (i.e. lower ideals generated by the coatoms; see Figure~\ref{fig:subtrees.}) and $d_i$ be the number of elements of $T_i$. 
    The polynomials $p_{T_i, r}$ and $A_{T_i, r}$ are defined analogously.

\begin{figure}
\begin{subfigure}[t]{0.45\textwidth}
\centering
        \begin{tikzpicture}[
            vertex/.style={
                circle,
                draw=black,
                minimum size=6mm,
                inner sep=1pt
            },
            dotnode/.style={draw=none}
        ]
        \node[vertex] (v)  at (-0.15,2.5) {$3$};
        
        \node[vertex] (v2) at (-2.2,1) {$2$};
        \node[vertex] (v6) at (-0.15,1) {$6$};

               \node[vertex] (v1) at (-1.4,-0.5) {$1$};
        \node[vertex] (v4) at (-0.15,-0.5) {$4$};
        \node[vertex] (v5) at (1.2,-0.5) {$5$};
        
        \draw (v) -- (v2);
        \draw (v) -- (v6);
        \draw (v6) -- (v1);
        \draw (v6) -- (v4);
        \draw (v6) -- (v5);
        \end{tikzpicture}
        \caption{Example of a poset given by a hanging tree.}
        \label{fig:hangingtree}
           \end{subfigure}
    \hfill
    \begin{subfigure}[t]{0.45\textwidth}
    \centering
        \begin{tikzpicture}[
            vertex/.style={
                circle,
                draw=black,
                minimum size=6mm,
                inner sep=1pt
            },
            dotnode/.style={draw=none}
        ]

        \node[vertex] (v)  at (-0.15,2.5) {$v$};
        
        \node[vertex] (v1) at (-2.2,1) {$v_1$};
        \node[vertex] (v2) at (-0.7,1) {$v_2$};
        \node[dotnode]     at (0.45,1) {$\cdots$};
        \node[vertex] (vm) at (1.6,1) {$v_m$};
        
        \draw (v) -- (v1);
        \draw (v) -- (v2);
        \draw (v) -- (vm);
        
        \foreach \x in {v1,v2,vm} {
            \draw (\x.240) -- ++(240:0.7);
            \draw (\x.270) -- ++(270:0.7);
            \draw (\x.300) -- ++(300:0.7);
        
            \path (\x) ++(0,-1.25) node[dotnode] {$\cdots$};
        }   
        \end{tikzpicture}
        \caption{Roots of subtrees (coatoms).}
        \label{fig:subtrees.}
    \end{subfigure}
    \caption{Rooted trees.}
    \label{fig:placeholder}
\end{figure}

    For given $0 \leq r' < s(v_i)$ and $0 \leq r < s(v)$, the inequality $(ks(v_i) + r')/s(v_i) \leq (ns(v) + r)/s(v)$ is equivalent to
    \[
    \begin{cases}
        k \leq n & \text{if }r'/s(v_i) \leq r/s(v)\, ,\\
        k < n & \text{if }r'/s(v_i) > r/s(v) \, .
    \end{cases}
    \]
    Similarly, the inequality $(ks(v_i) + r')/s(v_i) < (ns(v) + r)/s(v)$ is equivalent to
    \[
    \begin{cases}
        k \leq n & \text{if }r'/s(v_i) < r/s(v)\, ,\\
        k < n & \text{if }r'/s(v_i) \geq r/s(v) \, .
    \end{cases}
    \]
    Using the tree structure of $T$ we obtain
    \begin{eqnarray*}
        p_{T,r}(n)&=& \prod _{i\in [m]\atop v_i<v}|\{x\in C_{T_i}\cap \mathbb{Z}^d_i \colon \tfrac{x(v_i)}{s(v_i)}\leq n + \tfrac{r}{s(v)}\}| \cdot \prod _{i\in [m]\atop v_i>v}|\{x\in C_{T_i}\cap \mathbb{Z}^d_i \colon \tfrac{x(v_i)}{s(v_i)} < n + \tfrac{r}{s(v)}\}|\\
        &=& \prod _{i\in [m]\atop v_i<v} \left(\sum_{r'\colon r'/s(v_i) \leq r/s(v)} \sum _{k\leq n} p_{T_i,r'}(k) + \sum_{r'\colon r'/s(v_i) > r/s(v)}\sum _{k< n} p_{T_i,r'}(k)\right)\\
        &\times & \prod _{i\in [m]\atop v_i>v} \left(\sum_{r'\colon r'/s(v_i) < r/s(v)} \sum _{k\leq n} p_{T_i,r'}(k) + \sum_{r'\colon r'/s(v_i) \geq r/s(v)}\sum _{k< n} p_{T_i,r'}(k)\right)
    \end{eqnarray*}
    In particular, since $p_{T_i,r'}(k)$ is a polynomial of degree $d_i -1$  by induction it follows that $p_{T,r}$ is a polynomial of degree $d_1+d_2+\cdots +d_m=d-1$.

        It remains to show that $A_{T,r} \preceq A_{T,r'}$ whenever $0 \leq r' < r < s(v)$. Setting $k_i = \lfloor rs(v_i)/s(v)\rfloor$ for $i \in [m]$ such that $v_i < v$ and $k_i = \lceil rs(v_i)/s(v)\rceil - 1$ for $i \in [m]$ such that $v_i > v$, on the level of generating functions the equation above translates to
    \begin{equation*}
        \frac{A_{T, r}(t)}{(1 - t)^d} = \mathop{\mathlarger{\ast}}\limits_{i = 1}^m \left(\sum_{r' \leq k_i} \frac{A_{T_i, r'}(t)}{(1 - t)^{d_i + 1}} + \sum_{r' > k_i} \frac{tA_{T_i, r'}(t)}{(1 - t)^{d_i + 1}}\right).
    \end{equation*}
    For any $i \in [m]$ and $0 \leq r < s(v)$, we define
    \begin{gather*}
        f_i \coloneqq f\left(d_i, \sum_{r'} A_{T_i,r'}\right), \quad \tilde{f}_i \coloneqq f\left(d_i, t\sum_{r'} A_{T_i,r'}\right), \quad h_r \coloneqq f\left(d - 1, A_{T, r}\right),\text{ and}\\
        g_{i, r} \coloneqq f\left(d_i, \sum_{r' \leq k_i}A_{T_i, r'} + t\sum_{r' > k_i}A_{T_i, r'}\right).
    \end{gather*}

    By Lemma~\ref{lem:fromhtof} we need to show that $h_r \preceq h_{r'}$ whenever $0 \leq r' < r < s(v)$. From the computations above, for any $0 \leq r < s(v)$, we have $h_r = \mathop{\diamond}\limits_{i = 1}^m g_{i, r}$ by Theorem~\ref{thm:wagner}. 
    By induction and Proposition~\ref{prop:interlacingsequences}, for any $i \in [m]$ and $r' < r$, we have $f_i \preceq g_{i, r} \preceq g_{i, r'} \preceq \tilde{f}_i$.

    Let $0 \leq r' < r < s(v)$. 
    Repeatedly applying \Cref{diamond_interlacing} using $f_i \preceq g_{i, r}$, we get
    \begin{equation*}
        \mathop{\diamond}\limits_{i = 1}^m f_i\ \preceq\ g_{1, r} \diamond f_2 \diamond f_3 \diamond \cdots \diamond f_m\ \preceq\ g_{1, r} \diamond g_{2, r} \diamond f_3 \diamond \cdots \diamond f_m\ \preceq \cdots \preceq\ \mathop{\diamond}\limits_{i = 1}^m g_{i, r}.
    \end{equation*}
    Similarly, using $g_{i, r} \preceq g_{i, r'}$ and $g_{i, r'} \preceq \tilde{f}_i$, we get
    \begin{align*}
        &\mathop{\diamond}\limits_{i = 1}^m g_{i, r}\ \preceq\ g_{1, r'} \diamond g_{2, r} \diamond \cdots \diamond g_{m, r}\ \preceq \cdots \preceq\ \mathop{\diamond}\limits_{i = 1}^m g_{i, r'}, \text{ and}\\[0.2cm]
        &\mathop{\diamond}\limits_{i = 1}^m g_{i, r'}\ \preceq\ \tilde{f}_1 \diamond g_{2, r'} \diamond \cdots \diamond g_{m, r'}\ \preceq \cdots \preceq\ \mathop{\diamond}\limits_{i = 1}^m \tilde{f}_i.
    \end{align*}
    By \Cref{firstlast_interlacing}, if we show that $\mathop{\diamond}\limits_{i = 1}^m f_i \preceq \mathop{\diamond}\limits_{i = 1}^m \tilde{f}_i$, then we get
    \begin{equation*}
        h_r = \mathop{\diamond}\limits_{i = 1}^m g_{i, r}\ \preceq\ \mathop{\diamond}\limits_{i = 1}^m g_{i, r'} = h_{r'}.
    \end{equation*}
    Suppose $h$ is the polynomial of degree at most $d$ such that $\mathop{\diamond}\limits_{i = 1}^m f_i = f(d, h)$ and $\tilde{h}$ is such that $\mathop{\diamond}\limits_{i = 1}^m \tilde{f}_i = f(d, \tilde{h})$. 
    We have
    \begin{equation*}
        \frac{\tilde{h}(t)}{(1 - t)^{d}} = \mathop{\mathlarger{\ast}}\limits_{i = 1}^m \left(\frac{t\sum_r A_{T_i, r}}{(1 - t)^{d_i + 1}}\right) = t \cdot \mathop{\mathlarger{\ast}}\limits_{i = 1}^m \left(\frac{\sum_r A_{T_i, r}}{(1 - t)^{d_i + 1}}\right) = \frac{t \cdot h(t)}{(1 - t)^{d}}.
    \end{equation*}
    This shows that $h \preceq \tilde{h}$ and hence $\mathop{\diamond}\limits_{i = 1}^m f_i \preceq \mathop{\diamond}\limits_{i = 1}^m \tilde{f}_i$.
\end{proof}

We conclude this section by noting that Corollary~\ref{cor:weightedrealrooted} also extends to the weights in \Cref{prop:ulecweights}, provided each of the posets $P_i$ are rooted forests (disjoint unions of rooted trees). 
Further, we note that the conditions on $a$ and $b$ in \Cref{prop:weightlifting} are required to ensure real-rootedness for the weighted $h^\ast$-polynomials. 
For example, with $d = 1, s(1) = 2, w(x) = \binom{x(1) + 3}{2}$, the corresponding weighted $h^\ast$-polynomial is $t^3 - 3t^2 + 7t + 1$, which has a negative coefficient and is not real-rooted. 
Similar examples can be constructed for other types of weights.

\section{Concluding remarks}
We conclude by remarking on interpretations, variants and further consequences of the above results.

We start by noting that combinatorial interpretations can be given for the coefficients of the weighted $h^\ast$-polynomials we have considered in this paper. Given any weight function as in  \Cref{prop:weightlifting} or \Cref{prop:ulecweights}, the weighted $h^\ast$-polynomial agrees with the usual $h^\ast$-polynomial of the weight lifting polytope described in the proof of these propositions. This weight lifting polytope is a $s$-lecture hall order polytope and \cite[Corollary 3.7]{lec_poset} gives a general expression for the $h^\ast$-polynomial of such polytopes.

\Cref{thm:realrooted} can be formulated also for duals of rooted hanging trees: If $P$ is the dual of a rooted hanging tree on $[d]$ and $s \colon P \rightarrow \Z_+$ a labeling of its elements, then the lattice points in dilates of $\mathcal{O}_{P,s}$ are in bijection with the lattice points in dilates of $\mathcal{O}_{P^\ast,s^\ast }$; here, $P^\ast$ is the rooted tree defined by $i \prec_{P^\ast} j$ if and only if $d - i + 1 \succ_P d - j + 1$ and $s^\ast$ is the map defined by $s^\ast(i) = s(d - i + 1)$. 
 By considering the map on $\R^d$ that sends $x(i)$ to $ns(i) - x(i)$ for all $i \in [d]$, it can then be seen that $\mathrm{E}_{\mathcal{O}(P, s)}(n) = \mathrm{E}_{\mathcal{O}(P^\ast, s^\ast)}(n)$ for all $n \geq 1$. See also \cite[Theorem 3.9]{lec_poset}.

 Finally, since taking disjoint unions of posets corresponds to taking the Hadamard product of the  Ehrhart series of the corresponding $s$-lecture hall order polytopes, \Cref{thm:realrooted} carries over also to rooted forests by~\cite[Theorem 0.1]{wagner}. In particular, the unit cube in $\R^d$ can be viewed as $\mathcal{O}(A, s)$ where $A$ is the antichain on $[d]$ (which is a forest) and $s(i) = 1$ for all $i \in [d]$. 
This shows that the weighted $h^\ast$-polynomial of the cube associated to any weights of the form described in \Cref{prop:weightlifting,prop:ulecweights} are real-rooted. 
This complements the results for unit cubes in \cite[Section 4.6]{Garzon_Mora2025-hk} that were obtained by the first author together with Jes\'us De Loera, Alexey Garber, Sof\'ia Garz\'on Mora and Josephine Yu.

\textbf{AI usage declaration:} The results, proofs and final write-up where produced without the usage of any AI, with the exception of help with the TikZ pictures.

\textbf{Acknowledgements:} KJ was partially supported by the Wallenberg AI, Autonomous Systems and Software program funded by
the Knut and Alice Wallenberg foundation, grant nr 2023-04063 from the Swedish research council and the Verg
Foundation. KM was partially supported by the Verg foundation and the Göran Gustafsson foundation.

\bibliographystyle{siam}
\bibliography{refs}

\end{document}